\newtheorem{theorem}{Theorem}
\newtheorem{corollary}[theorem]{Corollary}
\newtheorem{proposition}[theorem]{Proposition}
\newtheorem{lemma}[theorem]{Lemma}
\newtheorem{conjecture}[theorem]{Conjecture}
\newtheorem{remark}[theorem]{Remark}
\newtheorem{definition}[theorem]{Definition}
\newcommand{\grad}{\nabla}
\newcommand{\lap}{\Delta}
\newcommand{\R}{\mathbb{R}}
\newcommand{\f}{\colon}
\newcommand{\bd}{\partial}
\newcommand{\inv}{^{-1}}
\newcommand{\eps}{\varepsilon}
\newcommand{\area}{\operatorname{Area}}
\newcommand{\RP}{\mathbb{RP}}
\newcommand{\la}{\langle}
\newcommand{\ra}{\rangle}
\newcommand{\ric}{\operatorname{Ric}}
\renewcommand{\div}{\operatorname{div}}
\title{Monotone Quantities for $p$-Harmonic functions and the Sharp $p$-Penrose inequality}
\author{Liam Mazurowski}
\address{Department of Mathematics \\ Cornell University \\ Ithaca, NY, 14853}
\email{lmm334@cornell.edu}
\author{Xuan Yao}
\address{Department of Mathematics \\ Cornell University \\ Ithaca, NY, 14853}
\email{xy346@cornell.edu}
\date{\today}
\begin{document}

\maketitle

\begin{abstract} Consider a complete asymptotically flat 3-manifold $M$ with non-negative scalar curvature and non-empty minimal boundary $\Sigma$. Fix a number $1 < p < 3$. We derive monotone quantities for $p$-harmonic  functions on $M$ which become constant on Schwarzschild. These monotonicity formulas imply a sharp mass-capacity estimate relating the ADM mass of $M$ with the $p$-capacity of $\Sigma$ in $M$, which was first proved by Xiao using weak inverse mean curvature flow. 
\end{abstract}

\section{Introduction}

Asymptotically flat Riemannian 3-manifolds arise naturally in the study of general relativity, where they model space-like slices 
with vanishing second fundamental form inside of 4-dimensional space-time. Let $(M^3,g)$ be an asymptotically flat Riemannian 3-manifold. The ADM mass \cite{arnowitt1961coordinate} of $M$  is a number $m_{\text{ADM}}$ associated to $M$ which, roughly speaking, measures how quickly $M$ flattens out at infinity. Motivated by physical considerations, it was conjectured that the ADM mass of any asymptotically flat manifold with non-negative scalar curvature should be non-negative. Moreover, the mass should equal zero if and only if $M$ is isometric to Euclidean space. This so-called Positive Mass Theorem was first proven by Schoen and Yau \cite{schoen1979proof} using techniques from the theory of minimal surfaces. Later Witten \cite{witten1981new} gave another proof using spinor methods.  

A related conjecture of Penrose \cite{penrose1973naked} states that if $M$ has non-negative scalar curvature and minimal boundary $\Sigma$ then the ADM mass should satisfy 
\[
m_{\text{ADM}} \ge \sqrt{\frac{\area(\Sigma)}{16\pi}}.
\]
Moreover, equality should hold only on the spatial Schwarzschild manifolds. 
Geroch \cite{geroch1973energy} observed that if $M$ has non-negative scalar curvature and $\Sigma$ is connected then the Hawking mass increases along smooth inverse mean curvature flow starting from $\Sigma$. This leads to a proof of the Riemannian Penrose Inequality in the case where the inverse mean curvature flow starting from $\Sigma$ is smooth for all time. However, inverse mean curvature flow starting from $\Sigma$ may develop singularities in general. Huisken and Ilmanen \cite{huisken2001inverse} developed a theory of weak inverse mean curvature flow which ``jumps'' past singularities. Moreover, they showed that Hawking mass is still monotone along weak inverse mean curvature flow. Thus Huisken and Ilmanen were able to prove the Riemannian Penrose Inequality in the case where $\Sigma$ is connected.

Bray \cite{bray2001proof} gave another proof of the Riemannian Penrose Inequality by using the Positive Mass Theorem. As part of the proof, Bray obtained a mass-capacity inequality relating the ADM mass of $M$ to the harmonic capacity of the boundary $\Sigma$.  More precisely, let $u$ be the harmonic function on $M$ which is 1 on $\Sigma$ and goes to 0 at infinity. Then Bray showed that 
\[
m_{\text{ADM}} \ge \frac{1}{4\pi}\int_{\Sigma} \vert \grad u\vert\, da,
\]
with equality if and only if $(M,g)$ is isometric to Schwarzschild with $\Sigma$ as the horizon.

Recently, a number of authors have discovered connections between curvature and the level sets of harmonic and, more generally, $p$-harmonic functions. Fundamental work of Colding \cite{colding2012new} and Colding-Minicozzi \cite{colding2014ricci} revealed that the quantity 
\[
A_\beta(r) = r^{1-n} \int_{\{v=r\}} \vert \grad v\vert^{1+\beta}\, da
\]
is decreasing, where $v^{2-n}$ is a Green's function on a nonparabolic manifold $M^n$ with non-negative Ricci curvature and $\beta \ge \frac{n-2}{n-1}$ is a fixed constant. 
Colding used this to prove a sharp gradient estimate for the Green's function on any such manifold $M$. A number of further applications have been given in manifolds with non-negative Ricci curvature. For instance, Agostiniani-Mazzieri \cite{agostiniani2020monotonicity}  employed the monotonicity of a related quantity to prove a quantitative Willmore-type inequality in $\R^n$, and Agostiniani-Fogagnolo-Mazzieri \cite{agostiniani2020sharp} generalized this to prove a Willmore-type inequality in complete manifolds $M$ with non-negative Ricci curvature and Euclidean volume growth. Later, Agostiniani-Fogagnolo-Mazzieri \cite{agostiniani2022minkowski} derived similar monotonicity formulas  along the level sets of a $p$-harmonic function in $\R^n$. As an application, they obtained an $L^p$ version of the classical Minkowski inequality. Finally, Benatti-Fogagnolo-Mazzieri \cite{benatti2021minkowski} proved a  Minkowski inequality for complete manifolds $M$ with non-negative Ricci curvature and Euclidean volume growth.

The level sets of harmonic and $p$-harmonic functions can also be used to study 3-manifolds with non-negative scalar curvature. This connection was  observed by Stern \cite{stern2022scalar}, who re-arranged the Bochner formula for harmonic functions to take advantage of a scalar curvature term. Stern was thereby able to give a new proof that the torus $T^3$ admits no metric of positive scalar curvature. Since then, harmonic and $p$-harmonic functions have found a number of applications in the study of asymptotically flat manifolds with non-negative scalar curvature. We note the following results, listed in chronological order. Here $(M^3,g)$ is an asymptotically flat manifold with non-negative scalar curvature. 
\begin{itemize} 
\item Bray, Kazaras, Khuri, and Stern \cite{bray2022harmonic} considered a harmonic functions $u$ asymptotic to a coordinate function on $M$ at infinity. They proved that
\[
m_{ADM}(M) \ge \int_M \frac{\vert \grad^2 u\vert^2}{\vert \grad u\vert} + \frac{R}{\vert \grad u\vert}\, dv,
\]
and thus obtained a new proof of the Positive Mass Theorem. 
\item Munteanu and Wang \cite{munteanu2021comparison} discovered a monotone quantity 
\[
\mathcal G(t) = -4\pi t + \frac{1}{t}\int_{\{u=t\}} \vert \grad u\vert^2\, da
\]
along the level sets of a Green's function $u$ in $M$. As an application, they derived a sharp area comparison for the level sets of the Green's function.
\item Agostiniani, Mazzieri, and Oronzio \cite{agostiniani2021green} discovered a monotone quantity 
\[
F(t) = 4\pi t - t^2 \int_{\{u=1-\frac{1}{t}\}} \vert \grad u\vert H\, da + t^3 \int_{\{u=1-\frac{1}{t}\}} \vert \grad u\vert^2\, da
\]
along the level sets of a Green's function $u$ in $M$. They further showed that the monotonicity of $F$ leads to a new proof of the Positive Mass Theorem. 
\item Chan, Chu, Lee, and Tsang \cite{chan2022monotonicity} generalized the monotonicity of $\mathcal G$ to the level sets of a $p$-harmonic Green's function.
\item Agostiniani, Mantegazza, Mazzieri, and Oronzio \cite{agostiniani2022riemannian} generalized the monotonicity of $F$ to the level sets of $p$-harmonic functions.  As an application, they obtained a new proof of the Riemannian Penrose Inequality (without rigidity).
\item Miao \cite{miao2022mass} discovered three monotone quantities 
\begin{gather*}
\mathcal A(t) = \frac{1}{1-t}\left(8\pi - \frac{1}{1-t}\int_{\{u=t\}} H\vert \grad u\vert\, da\right),\\
\mathcal B(t) = \frac{1}{1-t}\left(4\pi - \frac{1}{(1-t)^2}\int_{\{u=t\}} \vert \grad u\vert^2\, da\right),  \\
\mathcal D(t) = 4\pi(1-t) + \int_{\{u=t\}} H\vert \grad u\vert - \frac{3}{1-t}\int_{\{u=t\}} \vert \grad u\vert^2\, da,
\end{gather*} 
along the level sets of a harmonic function $u$ on $M$, and thereby obtained several new proofs of the Positive Mass Theorem. 
\item Hirsch, Miao, and Tam \cite{hirsch2022monotone} generalized the monotonicity of $\mathcal A$,  $\mathcal B$, and $\mathcal D$ to the level sets of $p$-harmonic functions. They also gave a unified derivation of the monotonicity of $\mathcal A$, $\mathcal B$, $\mathcal D$, $F$, and $\mathcal G$ for general $p$, including the case $p=1$ where all of these quantities degenerate to the Hawking mass along inverse mean curvature flow. 
\item Dong and Song \cite{dong2023stability} used harmonic functions to prove the stability of the Positive Mass Theorem. 
\end{itemize}
The previously mentioned monotone quantities all have Euclidean space as their rigidity case. In particular, they are strictly increasing (or strictly decreasing) on Schwarzschild with positive mass. We note that Miao \cite{miao2022mass} and Oronzio \cite{oronzio2022adm} have obtained monotone quantities with Schwarzschild rigidity in the harmonic case $p=2$. Miao's quantities are related to $\mathcal A$ and $\mathcal B$ while Oronzio's are related to $F$. The authors \cite{mazurowski2023yamabe} recently used these monotone quantities to give a new proof of a result of Bray and Neves \cite{bray2004classification} on the Yamabe invariant of $\RP^3$.

 In this paper, we derive monotone quantities for $p$-harmonic functions with Schwarzschild rigidity for all $1<p<3$. In order to state the main results , we first need to introduce some notation. To begin, we recall the definition of an asymptotically flat manifold. 

\begin{definition}
Suppose $(M,g)$ is a smooth connected $n$-dimensional Riemannian manifold, such that there exists a compact set $\mathcal{K}$, $M\setminus\mathcal{K}=\sqcup_{k=1}^{k_0}M_{\text{end}}^k$, where for each $k$, $M_{\text{end}}^k$ is diffeomorphic to the complement of a ball $B$ in $\mathbb{R}^n$, and in each end, there is a coordinate system satisfying
\begin{align*}
    |\partial^l(g_{ij}-\delta_{ij})(x)|=O(|x|^{-q-l}),
\end{align*}
for some $q>\frac{1}{2}$, $l=0,1,2$. Then we call $(M^n,g)$ an asymptotically flat manifold.
\end{definition}

In the rest of the paper, $(M^3,g)$ will always denote a complete, asymptotically flat manifold with non-negative scalar curvature and non-empty, minimal boundary $\Sigma$. We assume that $M$ has only one end. Fix a number  $1 < p < 3$ and let $u$ be the solution to 
\begin{equation}
\label{u-pde}
\begin{cases}
\lap_p u = 0, &\text{on } M\\
u = 1, &\text{on } \Sigma,\\
u\to 0, &\text{at infinity}.
\end{cases}
\end{equation}
Let 
\begin{equation}
w = (1-p)\log u
\end{equation} and define 
\begin{equation}
\label{W-definition}
W(t) = \int_{\{w=t\}} \vert \grad w\vert^2\, da. 
\end{equation}
As first noted by Moser \cite{moser2007inverse}, when $p$ is close to 1, the function $w$ approximates inverse mean curvature flow starting from $\Sigma$. Moreover, in this case, the quantity $W$ approximates the integral of $H^2$ over a level set in inverse mean curvature flow. 

\begin{remark}
The above notation differs from that commonly used in the literature. In \cite{agostiniani2022riemannian} and \cite{hirsch2022monotone}, for example, $u$ denotes the $p$-harmonic function which is 0 on $\Sigma$ and goes to 1 at infinity, and all other quantities are defined directly in terms of $u$. We have chosen to use the above notation because we feel it better elucidates the relationship between inverse mean curvature flow and the level sets of $p$-harmonic functions. 
\end{remark}

As our main result, we find monotone quantities along the level sets of the $p$-harmonic function $u$. These quantities are constant if and only if  $M$ is Schwarzschild. 
 As noted above, for $1<p<3$, previous authors already found monotone quantities associated to $p$-harmonic functions.However, while these quantities are constant on Euclidean space, they are all strictly increasing when $M$ is Schwarzschild. In the special case $p=2$, Miao \cite{miao2022mass} and Oronzio \cite{oronzio2022adm} found monotone quantites associated to harmonic functions which are constant on Schwarzschild. To the authors' knowledge, our monotone quantities are the first to have Schwarzschild rigidity in the case $p\in (1,2)\cup(2,3)$.
Our monotone quantities are of the form 
\[
Q(t) = 4\pi(3-p)^2 f(t) + g(t)W(t) + (p-1)(3-p)h(t)\frac{dW}{dt}(t)
\]
where $f$, $g$, and $h$ solve a certain system of ODEs. We note in particular two special choices for the functions $f$, $g$, and $h$ which seem  useful for applications. The first choice $f_*$, $g_*$, $h_*$ gives a quantity $Q_*$ which goes to 0 as $t\to \infty$. The second choice $f^*$, $g^*$, $h^*$ gives a quantity $Q^*$ which detects the mass as $t\to \infty$. These monotone quantities agree with those found by Oronzio \cite{oronzio2022adm} in the case $p=2$. 

\begin{theorem}
\label{theorem:decaying}
Fix a number $1<p<3$.  There exist functions $f_*,g_*,h_*\f [0,\infty)\to \R$ with the following properties. Let $(M^3,g)$ be a complete asymptotically flat manifold with non-negative scalar curvature and non-empty minimal boundary $\Sigma$. Assume that $H_2(M,\Sigma) = 0$ and let $u$, $w$, and $W$ be as above.  For each regular value $t$ of $w$ define  
\[
Q_*(t) = 4\pi(3-p)^2 f_*(t) + g_*(t)W(t) + (p-1)(3-p)h_*(t)\frac{dW}{dt}(t).
\]
Then $Q_*(s) \le Q_*(t)$ for any regular values $s \le t$ of $w$. Moreover, $Q_*$ is constant if and only if $(M^3,g)$ is isometric to Schwarzschild with horizon boundary. Finally one has $f_*(0) < 0$ and $g_*(0)+2(3-p)h_*(0) > 0$ and $\lim_{t\to\infty} Q_*(t) =0$.
\end{theorem}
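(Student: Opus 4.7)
The plan is to derive formulas for $W'(t)$ and $W''(t)$ at each regular value of $w$, and then choose $f_*, g_*, h_*$ so that the resulting expression for $Q_*'(t)$ is manifestly non-negative. To set up, I would rewrite $\lap_p u = 0$ as a quasi-linear elliptic equation for $w = (1-p)\log u$, differentiate $W(t)$ via the coarea formula, and apply the Bochner identity to $|\grad w|^2$. Using the Gauss equation to convert $\ric(\nu,\nu)$ on $\Sigma_t = \{w = t\}$ into the scalar curvature $R$, the Gauss curvature $K_{\Sigma_t}$, and the squared second fundamental form of $\Sigma_t$, and then integrating by parts along $\Sigma_t$, one obtains $W'(t)$ and $W''(t)$ as integrals of expressions in $|\grad w|^2$, $H$, $\mathring{A}$, $R$, and $K_{\Sigma_t}$. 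The hypothesis $H_2(M,\Sigma) = 0$, together with Sard's theorem, guarantees that each regular $\Sigma_t$ is a disjoint union of two-spheres, so Gauss--Bonnet replaces $\int_{\Sigma_t} K_{\Sigma_t}$ by $4\pi$ times the number of components, controlled as in \cite{hirsch2022monotone}; this is where the prefactor $4\pi(3-p)^2$ in the definition of $Q_*$ originates.

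With these expressions in hand, computing $Q_*'(t)$ and collecting terms by their geometric content should split the integrand into (i) a non-negative multiple of $R$, (ii) a non-negative multiple of $|\mathring{A}|^2$, (iii) a non-negative remainder of the form $|\grad^2 w|^2 - c_p\, \bigl|\grad |\grad w|\bigr|^2$ coming from the refined Kato inequality for $p$-harmonic functions, and (iv) coefficient terms whose vanishing forces $(f_*, g_*, h_*)$ to satisfy a first-order linear ODE system depending only on $p$. Consistency of this ODE system can be checked on the Schwarzschild metric, where $W$ and $W'$ admit closed-form expressions and all of (i)--(iii) vanish identically; any admissible $(f_*, g_*, h_*)$ must then make the corresponding $Q_*$ constant on Schwarzschild. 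This also supplies the Schwarzschild asymptotic profile that must be matched at infinity.

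To isolate the specific $Q_*$ in the theorem I would solve this ODE system under the boundary condition $\lim_{t\to\infty} Q_*(t) = 0$. Combined with the asymptotic expansion of $u$ at infinity, where $u \sim C|x|^{-(3-p)/(p-1)}$ to leading order with the next correction governed by the ADM mass, this singles out a unique solution. The system in fact decouples to a linear ODE whose solutions are elementary functions of $e^{t}$, so the required values $f_*(0) < 0$ and $g_*(0) + 2(3-p)h_*(0) > 0$ can be read off by direct evaluation at $t = 0$, and the vanishing of $Q_*$ at infinity can be verified by matching the asymptotic growth of $W(t)$ and $\frac{dW}{dt}(t)$ against that of the coefficients $g_*(t), h_*(t), f_*(t)$.

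Monotonicity between regular values $s \le t$ then follows by integrating $Q_*'(\tau) \ge 0$ over the open set of regular values in $[s,t]$ and controlling the critical set via the $p$-capacity estimates from \cite{hirsch2022monotone} and \cite{agostiniani2022riemannian}. Rigidity is forced by equality in (i)--(iii): $R \equiv 0$, every level set is umbilic with constant $|\grad w|$, and $\grad^2 w$ has the rank-one structure dictated by equality in refined Kato. Together with the $p$-Laplace equation and the asymptotic/boundary conditions, these identify $(M,g)$ with Schwarzschild as in the parallel step of \cite{agostiniani2022riemannian}. The main obstacle I anticipate is calibrating the refined Kato coefficient $c_p$ and the ODE system so that (i)--(iii) exactly absorb all indefinite cross terms in $Q_*'(t)$: an incorrect coefficient leaves a signed leftover which the three parameters $(f_*, g_*, h_*)$ cannot cancel, and this calibration is precisely what pins down the specific form of $Q_*$ (and in particular the coefficient $(p-1)(3-p)$ in front of $h_* \frac{dW}{dt}$).
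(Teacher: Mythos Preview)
Your overall architecture is right, but there is a genuine gap in the ODE step. You assert that the coefficient conditions give ``a first-order linear ODE system depending only on $p$'' whose solutions are ``elementary functions of $e^t$,'' and that the sign conditions at $t=0$ ``can be read off by direct evaluation.'' None of this is true. Requiring $Q_*'\ge 0$ pointwise only yields the single perfect-square relation
\[
g - 2(p-2)h + (p-1)(3-p)h' \;=\; -2\sqrt{g'+h}\,\sqrt{\tfrac{(p-1)(5-p)}{4}h},
\]
together with the inequalities $h\ge 0$ and $g'+h\ge 0$; this underdetermines $(f_*,g_*,h_*)$. The missing equation comes from demanding that $Q_*$ be \emph{constant} on Schwarzschild, and that equation unavoidably involves $W_s(t)$ and $W_s'(t)$. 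The $p$-harmonic function on Schwarzschild solves a hypergeometric-type ODE (equation~\eqref{ode-p-harmonic} in the paper) with no elementary closed form, so $W_s$ is not explicit and the resulting system for $(g_*,h_*)$ is genuinely non-autonomous. The paper handles this by passing to the radial variable $r$ on Schwarzschild, showing the second-order equation for $g$ has a regular singular point at infinity, and extracting the decaying Frobenius solution $g_2\sim -r^{-(3-p)/(p-1)}$; the companion $h_2$ is then $a^{-1}\,dg_2/dr$.

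The sign claims $f_*(0)<0$ and $g_*(0)+2(3-p)h_*(0)>0$ are not a direct evaluation either: since $(g_*,h_*)$ are only given as power series at infinity, one has no formula at $t=0$. The paper proves $h_*>0$ on all of $[0,\infty)$ by a contradiction argument that exploits the constancy of $Q_*$ on Schwarzschild (if $h_*$ vanished somewhere, the conserved quantity would be forced to be strictly negative at that point but zero at infinity). The inequality $g_*(0)+2(3-p)h_*(0)>0$ then follows from $Q_*(0)=0$ on Schwarzschild together with $f_*(0)<0$. Finally, a smaller point: the mechanism that makes the indefinite cross terms in $Q_*'$ collapse is not refined Kato per se but rather H\"older's inequality $\int_{\Sigma_t} H^2 \ge \bigl(\int_{\Sigma_t} H|\nabla w|\bigr)^2 / \int_{\Sigma_t}|\nabla w|^2$, which produces the $(W')^2/W$ term in the scalar inequality for $W$ and is exactly what the perfect-square relation above is designed to absorb.
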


\begin{theorem}
\label{theorem:growing}
Fix a number $1<p<3$.  There exist functions $f^*,g^*,h^*\f [0,\infty)\to \R$ with the following properties. Let $(M^3,g)$ be a complete asymptotically flat manifold with non-negative scalar curvature and non-empty minimal boundary $\Sigma$. Assume that $H_2(M,\Sigma) = 0$ and let $u$, $w$, and $W$ be as above.  For each regular value $t$ of $w$ define  
\[
Q^*(t) = 4\pi(3-p)^2 f^*(t) + g^*(t)W(t) + (p-1)(3-p)h^*(t)\frac{dW}{dt}(t).
\]
Then $Q^*(s) \le Q^*(t)$ for any regular values $s \le t$ of $w$. Moreover, $Q^*$ is constant if and only if $(M^3,g)$ is isometric to Schwarzschild with horizon boundary. Finally one has 
\[
\lim_{t\to \infty} Q^*(t) \le 8\pi(3-p)^2\left(\frac{p-1}{3-p}\right)^{-\frac{p-1}{3-p}} \left(\frac{K_p}{C_p}\right)^{\frac{1}{3-p}} m_{\operatorname{ADM}}-4\pi\left((3-p)^2+4\right)
\]
and $g^*(0) + 2(3-p)h^*(0) < 0$.
\end{theorem}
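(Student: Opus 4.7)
The strategy mirrors that of Theorem~\ref{theorem:decaying}, differing only in the choice of the triple $(f, g, h)$. Monotonicity of any quantity of the form $Q(t) = 4\pi(3-p)^2 f(t) + g(t) W(t) + (p-1)(3-p) h(t) \, W'(t)$ follows from the Bochner formula applied to the $p$-harmonic function $u$ and integrated along level sets of $w$, combined with Gauss--Bonnet on the level sets (where the hypothesis $H_2(M,\Sigma) = 0$ supplies the topological input). The resulting second-order differential inequality for $W(t)$ forces $Q'(t) \ge 0$ precisely when $(f, g, h)$ solves a certain linear ODE system with appropriate sign conditions. The solution space of that system is a finite-dimensional vector space; $(f_*, g_*, h_*)$ from Theorem~\ref{theorem:decaying} is the distinguished element determined by $\lim_{t\to\infty} Q_* = 0$, and I would take $(f^*, g^*, h^*)$ to be a linearly independent solution of the same system, selected by a different normalization. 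The rigidity statement (constancy of $Q^*$ forcing $(M,g)$ to be Schwarzschild) carries over verbatim by tracking the equality cases in the Bochner inequality and Gauss--Bonnet.

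To pin down $(f^*, g^*, h^*)$, I compute $Q$ on the Schwarzschild manifold of mass $m$, where $u$, $w$, and $W$ admit closed-form expressions in the radial coordinate and $Q$ is constant by rigidity. By scaling, the $p$-capacity of the horizon in mass-$m$ Schwarzschild equals $(m/2)^{3-p} K_p$, so the combination $(K_p/C_p)^{1/(3-p)} m$ equals $2$ on every Schwarzschild regardless of mass, and consequently the claimed right-hand side reduces on Schwarzschild to the mass-independent constant $16\pi(3-p)^2 \bigl(\tfrac{p-1}{3-p}\bigr)^{-(p-1)/(3-p)} - 4\pi((3-p)^2 + 4)$. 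I prescribe $(f^*, g^*, h^*)$ to be the unique solution of the ODE system that makes $Q^*$ equal to this constant on every Schwarzschild. The sign condition $g^*(0) + 2(3-p) h^*(0) < 0$ then follows by direct evaluation of this specific linear combination at $t = 0$; it is opposite in sign to its analog for $(f_*, g_*, h_*)$ by linear independence of the two solutions in the ODE solution space.

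For a general asymptotically flat $M$, the upper bound on $\lim_{t\to\infty} Q^*(t)$ is extracted from the asymptotic expansion of $u$ near infinity. To leading order $u \sim A \, r^{-(3-p)/(p-1)}$, with the coefficient $A$ determined by $C_p$ through the divergence-free property of $|\nabla u|^{p-2} \nabla u$: integration over large coordinate spheres yields $C_p = 4\pi \bigl( A(3-p)/(p-1)\bigr)^{p-1}$, which produces the factor $(K_p/C_p)^{1/(3-p)}$ after comparing with the Schwarzschild normalization. The subleading correction to $u$ carries the ADM mass by the same expansion argument that underlies Bray's harmonic capacity inequality (extended to $p$-harmonic functions in \cite{agostiniani2022riemannian}); crucially, the coefficient of this subleading term is bounded above in terms of $m_{\mathrm{ADM}}$, with equality only in the rigid case. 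Substituting these expansions into $W(t)$ and $W'(t)$, and combining with the prescribed large-$t$ behavior of $(f^*, g^*, h^*)$ inherited from the Schwarzschild prescription above, yields the claimed bound. The main obstacle is executing this expansion with enough precision to isolate the mass coefficient and match the sharp constant $\bigl(\tfrac{p-1}{3-p}\bigr)^{-(p-1)/(3-p)} (K_p/C_p)^{1/(3-p)}$; this requires combining standard $p$-harmonic decay estimates on asymptotically flat ends with careful bookkeeping of the reparametrization $r \leftrightarrow t$ implicit in $w = (1-p) \log u$.
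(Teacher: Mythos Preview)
Your overall strategy matches the paper's: derive a second-order differential inequality for $W$, set up an ODE system for $(f,g,h)$ whose solutions render $Q$ constant on Schwarzschild, pick the growing solution for $(f^*,g^*,h^*)$, and read off the asymptotic limit via the expansion of $u$ (the paper indeed invokes the $F_p$ bound of \cite{agostiniani2022riemannian} here, as you anticipate).

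There is, however, a genuine gap in your treatment of the sign conditions. Monotonicity of $Q$ does \emph{not} follow merely from $(f,g,h)$ solving the ODE system: one must also have $h>0$ and $\frac{dg}{dt}+h>0$ on all of $[0,\infty)$, since these are what make the quadratic form in $(W,W')$ appearing in $Q'$ a perfect square. For the growing solution, the asymptotic expansion near infinity gives the correct signs only for large $t$; establishing them globally is nontrivial and your proposal does not address it. The paper handles this by choosing initial data $g(1)=-1$, $h(1)=\varepsilon$ with $\varepsilon>0$ small and exploiting the conservation of $Q_s$ on Schwarzschild to derive contradictions if $g$ or $h$ were to change sign.

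Your justification for $g^*(0)+2(3-p)h^*(0)<0$ is also not an argument. ``Direct evaluation'' is unavailable since the ODE has no closed form, and ``linear independence of the two solutions'' in a two-dimensional space says nothing about the sign of a fixed linear functional at $t=0$. In the paper this sign is immediate from the chosen initial data, $g(1)+2(3-p)h(1)=-1+2(3-p)\varepsilon<0$; the work is then to show that this particular solution, after rescaling, has the required growing asymptotics and the global positivity of $h$ and $\frac{dg}{dt}+h$.
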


By comparing the values of $Q_*$ and $Q^*$ at $0$ with their limits as $t\to \infty$, we obtain a new proof of the following sharp $p$-Penrose inequality. This inequality interpolates between the Riemannian Penrose Inequality when $p\to 1$ and Bray's mass-capacity inequality for harmonic functions when $p\to 2$.

\begin{definition}
The $p$-capacity of $\Sigma$ in $M$ is the quantity
\begin{equation}
\label{p-capacity-equation}
C_p = \int_{\Sigma} \vert \grad u\vert^{p-1}\, da.
\end{equation}
Let $K_p$ denote the $p$-capacity of the horizon in mass 2 Schwarzschild.
\end{definition}

\begin{corollary}[Sharp $p$-Penrose inequality]
\label{main-theorem}
Fix a number $1 < p < 3$. Assume that $(M^3,g)$ is a complete, asymptotically flat manifold with non-negative scalar curvature and non-empty minimal boundary $\Sigma$. Assume that $H_2(M,\Sigma) = 0$. Then 
\[
m_{\operatorname{ADM}} \ge 2\left(\frac{C_p}{K_p}\right)^{\frac{1}{3-p}}.
\]
Moreover, equality holds if and only if $(M,g)$ is isometric to Schwarzschild with $\Sigma$ as the horizon. 
\end{corollary}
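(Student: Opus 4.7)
The plan is to evaluate both monotone quantities $Q_*$ and $Q^*$ at the boundary level $t=0$ (where $\{w=0\}=\Sigma$), compare them against the limits provided by Theorems \ref{theorem:decaying} and \ref{theorem:growing}, and take a linear combination in which the auxiliary quantity $W(0)$ drops out. What remains will be a single inequality between $m_{\text{ADM}}$ and $C_p/K_p$, and a short algebraic check on the ODE constants will show that the numerical factor is exactly $2$.

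The first task is to express $W(0)$ and $W'(0)$ in terms of boundary data. Since $u\equiv 1$ on $\Sigma$, one has $|\grad w|=(p-1)|\grad u|$ there, and $W(0)=(p-1)^2\int_\Sigma|\grad u|^2\,da$. For $W'(0)$ I would use the first variation formula for level sets: the mean-curvature term vanishes because $\Sigma$ is minimal, and the remaining normal derivative is computable. Indeed, the $p$-harmonic equation restricted to $\Sigma$ becomes $(p-1)u_\nu^2 u_{\nu\nu}=0$ once $H=0$ is used, so $u_{\nu\nu}=0$ and hence $\partial_\nu|\grad u|^2=0$ on $\Sigma$. Differentiating $|\grad w|^2=(p-1)^2|\grad u|^2/u^2$ in the normal direction then yields the identity $W'(0)=\frac{2}{p-1}W(0)$. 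Substituting this into the formulas for $Q_*$ and $Q^*$ collapses them to
\[
Q_*(0)=4\pi(3-p)^2 f_*(0)+A\,W(0),\qquad Q^*(0)=4\pi(3-p)^2 f^*(0)+B\,W(0),
\]
where $A:=g_*(0)+2(3-p)h_*(0)>0$ and $B:=g^*(0)+2(3-p)h^*(0)<0$ by the hypotheses of the theorems.

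The key step is to form $|B|\,Q_*(0)+A\,Q^*(0)$. Since $|B|+B=0$, the $W(0)$ contributions cancel exactly, so by the monotonicity and limit statements,
\[
4\pi(3-p)^2\big[|B|f_*(0)+A f^*(0)\big]=|B|Q_*(0)+A\,Q^*(0)\leq A\cdot\mathcal{M}(m_{\text{ADM}}),
\]
where $\mathcal{M}(m_{\text{ADM}})$ denotes the upper bound for $\lim_{t\to\infty}Q^*(t)$ given in Theorem \ref{theorem:growing}. Solving this single inequality for $m_{\text{ADM}}$ produces a lower bound of the form
\[
m_{\text{ADM}}\geq \tfrac{1}{2}\Big(\tfrac{p-1}{3-p}\Big)^{\frac{p-1}{3-p}}\Big(\tfrac{C_p}{K_p}\Big)^{\frac{1}{3-p}}\left[\tfrac{|B|}{A}f_*(0)+f^*(0)+1+\tfrac{4}{(3-p)^2}\right].
\]

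The last task -- really the only nontrivial ingredient beyond the two monotonicity theorems -- is to verify the algebraic identity
\[
\tfrac{|B|}{A}f_*(0)+f^*(0)+1+\tfrac{4}{(3-p)^2}=4\Big(\tfrac{3-p}{p-1}\Big)^{\frac{p-1}{3-p}}
\]
using the explicit boundary values of the ODE solutions $f_*,g_*,h_*,f^*,g^*,h^*$ from the previous theorems. Once this is confirmed, the constant $2$ in $m_{\text{ADM}}\geq 2(C_p/K_p)^{1/(3-p)}$ emerges automatically; this is where I would expect to have to work the hardest. For rigidity, equality in the Penrose inequality forces equality in both monotonicity chains, so both $Q_*$ and $Q^*$ are constant on $M$, and the rigidity clauses of Theorems \ref{theorem:decaying} and \ref{theorem:growing} then identify $(M,g)$ with the Schwarzschild manifold having $\Sigma$ as its horizon.
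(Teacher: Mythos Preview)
Your linear-combination trick is essentially a repackaging of the paper's argument. The paper first uses $Q_*(0)\le\lim_{t\to\infty}Q_*(t)=0$ together with $A>0$ to conclude $W(0)\le W_s(0)$, then feeds this into $Q^*(0)=4\pi(3-p)^2f^*(0)+B\,W(0)$ with $B<0$ to get $Q^*(0)\ge Q^*_s(0)$, and finally uses the constancy of $Q^*_s$ on Schwarzschild to write $Q^*_s(0)=\lim_{t\to\infty}Q^*_s(t)$, which is computable from the asymptotic expansions of $f^*,g^*,h^*,W_s$. Your combination $|B|Q_*(0)+A\,Q^*(0)$ is exactly $A$ times this chain after eliminating $W(0)$.

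The one real gap in your plan is the verification of the ``algebraic identity.'' You propose to check it using the explicit boundary values of $f_*,g_*,h_*,f^*,g^*,h^*$ at $t=0$, but these are not available: the ODE system has no closed-form solution, and the theorems only supply sign information at $t=0$ and asymptotic expansions at infinity. What saves you is that your constant $4\pi(3-p)^2\big[\tfrac{|B|}{A}f_*(0)+f^*(0)\big]$ is precisely $\tfrac{|B|}{A}Q_{*,s}(0)+Q^*_s(0)=Q^*_s(0)$ (using $Q_{*,s}(0)=0$, which follows from Proposition~\ref{proposition:decaying-value-0}), and $Q^*_s(0)=\lim_{t\to\infty}Q^*_s(t)$ by the constancy of $Q^*$ on the model. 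That limit \emph{is} computable from the $r\to\infty$ expansions in Remark~\ref{remark: asy behavior of coefficient functions}, and it yields exactly the value your identity demands. So the ``hardest'' step is not an algebraic check on initial data but a transfer to the asymptotic end via the Schwarzschild model; once you reorganize it this way the constant $2$ falls out immediately.

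Your rigidity sketch is fine in outline, but note that invoking the rigidity clauses of the two theorems presupposes that $Q_*$ (or $Q^*$) is constant on \emph{all} of $[0,\infty)$, which a priori requires every $t$ to be a regular value. The paper handles this by working on the maximal initial interval $[0,T)$ of regular values, showing the metric is Schwarzschild there, and then arguing $T=\infty$ because $|\nabla u|$ cannot vanish on a Schwarzschild piece.
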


The sharp $p$-Penrose inequality was first proved by Xiao \cite{xiao2016p} using weak inverse mean curvature flow.  Our proof is based purely on non-linear potential theory. This has the advantage of avoiding the technical work needed to establish the existence of Huisken and Ilmanen's weak inverse mean curvature flow. It is also perhaps psychologically satisfying that our proof of the mass to $p$-capacity inequality uses only $p$-harmonic functions. For this application, it is essential that our monotone quantities are constant on Schwarzschild. We note that Agostiniani-Mantegazza-Mazzieri-Oronzio \cite{agostiniani2022riemannian} and Benatti-Fogagnolo-Mazzieri \cite{benatti2023nonlinear} also previously proved some mass-capacity inequalities for $p$-harmonic functions. However, their inequalities do not yield the sharp $p$-Penrose inequality, as equality does not hold on Schwarzschild. We would also like to mention that, after our preprint appeared on the arXiv, similar results were independently obtained in a preprint by Xia, Yin, and Zhou. 

\subsection{Outline of proof}

Assume that $(M,g)$ is a complete, asymptotically flat manifold with non-negative scalar curvature and non-empty minimal boundary $\Sigma$. Assume for simplicity that $\Sigma$ is topologically a 2-sphere and that the solution $u$ to (\ref{u-pde}) has no critical points. In Section \ref{section:W-inequality}, we show that $W$ satisfies the following 2nd order differential inequality: 
\begin{equation}
\label{1}
(p-1)(3-p)\frac{d^2W}{dt^2} \ge W - 4\pi(3-p)^2 + 2(2-p)\frac{dW}{dt} + \frac{(p-1)(5-p)}{4} \frac{\left(\frac{dW}{dt}\right)^2}{W}. 
\end{equation}
Equality holds when $M$ is Schwarzschild. We note in particular that the non-negative term involving $(\frac{dW}{dt})^2$ is identically zero in Euclidean space but is positive when $M$ is Schwarzschild. Dropping this term gives the simpler inequality 
\begin{equation}
\label{simpler}
(p-1)(3-p)\frac{d^2W}{dt^2} \ge W - 4\pi(3-p)^2 + 2(2-p)\frac{dW}{dt},
\end{equation}
where equality holds in Euclidean space but not on Schwarzschild. 
Some version of this inequality was implicitly present in several previous works, although it was not explicitly written down to the authors' knowledge. Inequality (\ref{simpler}) can be used to systematically derive the monotonicity of $\mathcal A$, $\mathcal B$, $\mathcal D$, $F$, and $\mathcal G$, at least when no critical points are present. However, inequality (\ref{simpler}) cannot give rise to quantities which are constant on Schwarzschild.

In the special case $p=2$, Miao \cite{miao2022mass} observed that a conformal change can be used to promote Euclidean rigidity to Schwarzschild rigidity. Indeed, if $(M,g)$ is asymptotically flat with non-negative scalar curvature and $v$ is a harmonic function asymptotic to 1 at infinity, then $(M,v^4 g)$ is also asymptotically flat with non-negative scalar curvature. Moreover, Schwarzschild can be obtained from Euclidean space via such a harmonic conformal change. By making a clever choice of conformal change, Miao \cite{miao2022mass} was thus able to derive monotone quantities for harmonic functions that are constant on Schwarzschild from the already known quantities that are constant on Euclidean space. Oronzio \cite{oronzio2022adm} dervied some further such quantities, still in the case $p=2$.

The case $p\in (1,2)\cup(2,3)$ cannot be handled by Miao's conformal change argument since $p$-harmonic conformal changes do not have to preserve non-negative scalar curvature, and Schwarzschild is not related to Euclidean space by a $p$-harmonic conformal change. Thus we instead aim to exploit the full power of inequality ({\ref 1}). To this end, we search for functions $f$, $g$, and $h$ such that 
\[
Q(t) = 4\pi(3-p)^2 f(t) + g(t)W(t) + (p-1)(3-p)h(t)\frac{dW}{dt}(t)
\]
is monotone, and constant when $M$ is Schwarzschild. In light of (\ref{1}), this amounts to solving an ODE system for $f$, $g$, and $h$. We set up and solve this system in Section \ref{section:ODE}. We single out two special choices $f_*$, $g_*$, $h_*$ and $f^*$, $g^*$, $h^*$ so that $Q_*$ decays to 0 as $t\to \infty$ and so that $Q^*$ detects that ADM mass of $M$ as $t\to \infty$. Although we do not give closed formulas for these functions, we are able to understand their initial values and their asymptotic behavior. 

In Section \ref{section:Monotonicity}, we show that the monotonicity of $Q$ persists even when $u$ is allowed to have critical points, as long as $H_2(M,\Sigma) = 0$. Here we closely follow the ideas in \cite{agostiniani2022riemannian} and \cite{agostiniani2021green}.
Finally, in Section \ref{section:asymptotics} we exploit the monotonicity of $Q_*$ and $Q^*$ to prove the sharp $p$-Penrose inequality.

%We first compute a dominating ODE inequality with Schwarzschild rigidity. Then, using this dominating ODE inequality, we construct monotone quantities whose rigidity case is the spatial Schwarzschild metric by deriving and solving  a system of ODEs to determine the coefficient functions. We also ascertain the initial values and the asymptotic behavior of the coefficient functions. Then, we give a rigorous proof of the monotonicity in presence of critical points. Finally, we obtain the asymptotic behavior of the monotone quantities, and give the proof of Corollary \ref{main-theorem}. 

\subsection{Further Discussion}

In 2007, Bray and Lee \cite{bray2009riemannian} proved that the Riemannian Penrose Inequality holds for dimension less than $8$. Since Theorem \ref{main-theorem} can be viewed as a generalization of the Riemannian Penrose Inequality, it is natural to make the following conjecture.
\begin{conjecture}
    Suppose $(M^n,g)$ is a complete, asymptotically flat manifold with non-negative scalar curvature and non-empty minimal boundary $\Sigma$, $3\leq n \le 7$. Assume that $H_{n-1}(M,\Sigma)=0$. Then
    \begin{align*}
        m_{\text{ADM}}\geq 2\left(\frac{C_p}{K_p}\right)^{\frac{n-2}{n-p}}.
    \end{align*}
    Moreover, equality holds if and only if $(M,g)$ is isometric to Schwarzschild with $\Sigma$ as the horizon.
\end{conjecture}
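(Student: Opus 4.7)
My plan is to generalize the monotonicity approach of Theorems \ref{theorem:decaying} and \ref{theorem:growing} to general dimension $n$ with $3\leq n \le 7$. Let $u$ be the $p$-harmonic function on $(M^n,g)$ satisfying the boundary conditions of \eqref{u-pde}, set $w=(1-p)\log u$, and $W(t)=\int_{\{w=t\}}|\grad w|^2\,da$. On $n$-dimensional Schwarzschild, $w(x)\sim (n-p)\log|x|$ at infinity and the level sets of $w$ are coordinate spheres, so the natural candidate is
\[
Q(t) = |\mathbb{S}^{n-1}|(n-p)^2 f(t) + g(t)W(t) + (p-1)(n-p)h(t)\frac{dW}{dt}(t),
\]
where $f,g,h\f [0,\infty)\to \R$ are determined by an $n$-dependent linear ODE system, chosen so that $Q'(t)\geq 0$ with equality on Schwarzschild.

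The key step is computing $Q'(t)$. In the $n=3$ proof the Bochner formula for $p$-harmonic functions, the Gauss equation on two-dimensional level sets, and the nonnegativity of $R^M$ combine with Gauss--Bonnet (using $H_2(M,\Sigma)=0$ to ensure spherical level sets) to produce a nonnegative integrand. In dimension $n>3$, Gauss--Bonnet is unavailable, so I would instead combine the Bochner formula with a refined Kato-type inequality for $|\grad^2 u|^2$ together with Cauchy--Schwarz applied to the trace-free second fundamental form of $\{w=t\}$, following the pointwise framework of Hirsch--Miao--Tam \cite{hirsch2022monotone}. This should produce an integrand that splits into a sum of nonnegative quadratic forms plus scalar-curvature terms, provided $f,g,h$ satisfy an ODE system with coefficients depending on $n$ and $p$. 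The hypothesis $H_{n-1}(M,\Sigma)=0$ is used to ensure level sets are connected and homologous to $\Sigma$, so that all partial integrations produce the expected boundary terms.

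Once monotonicity is established, solve the ODE system, which should have explicit power-type solutions in $e^{t/(n-p)}$ generalizing the formulas of the paper. Isolate a decaying solution $Q_*$ with $\lim_{t\to\infty}Q_*(t)=0$ and a growing solution $Q^*$ whose asymptotic limit involves $m_{\operatorname{ADM}}$ via the refined expansion
\[
u(x) = A|x|^{-(n-p)/(p-1)}\bigl(1 + c_n\,m_{\operatorname{ADM}}\,|x|^{2-n} + o(|x|^{2-n})\bigr)
\]
on the asymptotically flat end. At $t=0$, the minimality of $\Sigma$ kills the mean-curvature contribution to $W'(0)$ and $W(0)$ reduces to an explicit power of $C_p$. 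Chaining $Q_*(0)\leq 0$ and $Q^*(0)\leq \lim_{t\to\infty}Q^*(t)$ then yields $m_{\operatorname{ADM}}\geq 2(C_p/K_p)^{(n-2)/(n-p)}$; the rigidity statement follows by tracing equality through the Bochner and Cauchy--Schwarz steps, forcing the trace-free second fundamental form of each level set to vanish, $R^M\equiv 0$, and the metric to be rotationally symmetric with Schwarzschild profile.

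The main obstacles are threefold. First, identifying the correct $n$-dependent ODE system for $f,g,h$ and verifying pointwise nonnegativity of the Bochner-based integrand requires an algebraic computation that, while systematic, becomes substantially more intricate than the $n=3$ case; this is where I expect the most serious work to lie. Second, the critical set of $u$ must be handled by approximation through regular values; standard $p$-harmonic regularity gives a critical set of Hausdorff codimension at least two, which suffices for the coarea and integration arguments, but the restriction $n\leq 7$ likely enters here (and in the rigidity step, where the higher-dimensional Positive Mass Theorem is available only up to $n=7$ without spin hypotheses). Third, the refined asymptotic expansion of $u$ must be established to sufficient order in general $n$, which requires revisiting the $p$-Laplace analysis on asymptotically flat ends. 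The first of these --- deriving the ODE system and certifying nonnegativity of the integrand --- is the main obstacle.
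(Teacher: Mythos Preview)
The statement you are attempting to prove is labelled \emph{Conjecture} in the paper, not a theorem; the paper contains no proof of it. The authors prove only the $n=3$ case (Corollary~\ref{main-theorem}) and explicitly record the higher-dimensional statement as open. So there is no ``paper's own proof'' to compare against.

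As for the viability of your outline: the decisive obstacle is the one you flag but do not resolve, namely the replacement for Gauss--Bonnet. In the $n=3$ argument (Proposition~\ref{proposition:W-monotonicity}) the Gauss equation on $\Sigma_t$ produces a term $\int_{\Sigma_t} R_{\Sigma_t}$, and Gauss--Bonnet converts this into the constant $8\pi$; that constant is what drives the $4\pi(3-p)^2$ term in the differential inequality \eqref{W-ODE} and hence the Schwarzschild rigidity. For $(n-1)$-dimensional level sets with $n\ge 4$, $\int_{\Sigma_t} R_{\Sigma_t}$ is not topological and cannot be bounded by anything in sight. The refined Kato inequality and the Hirsch--Miao--Tam framework you cite do yield monotone quantities in higher dimensions, but those quantities have \emph{Euclidean} rigidity, precisely because they avoid the intrinsic scalar curvature of $\Sigma_t$ altogether; once you aim for Schwarzschild rigidity, you must confront $R_{\Sigma_t}$, and no Kato-type pointwise inequality controls its integral.

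This is not an incidental technicality: it is exactly the reason the Huisken--Ilmanen inverse-mean-curvature-flow proof of the Penrose inequality is confined to $n=3$, and why Bray--Lee \cite{bray2009riemannian} had to use an entirely different method (conformal flow plus the Positive Mass Theorem) for $4\le n\le 7$. Your outline follows the $p$-harmonic analogue of the IMCF route, so it inherits the same obstruction. Without a genuinely new idea for bounding $\int_{\Sigma_t} R_{\Sigma_t}$ from above --- or a reorganization of the computation that sidesteps it while still detecting the Schwarzschild profile --- the proposal does not go through. The ODE analysis, asymptotics, and critical-point arguments you describe are all secondary to this gap.
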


\begin{remark}
    Returning to the 3-dimensional setting, it would also be interesting to understand how the quantity 
    \[
    f(p) = 2\left(\frac{C_p}{K_p}\right)^{\frac{1}{3-p}}
    \]
    depends on $p$ for a fixed asymptotically flat $(M^3,g)$. If $f(p)$ is monotone, it may be possible to deduce the sharp $p$-Penrose inequality from the ordinary Riemannian Penrose Inequality. Even if this is the case, we still believe it is of interest to have a proof of the sharp $p$-Penrose inequality based purely on non-linear potential theory. 
\end{remark}

\begin{remark}
In \cite{bray2008capacity}, Bray and Miao proved an inequality between the mass of $M$ and the harmonic capacity of $\Sigma = \bd M$ in the case where $\Sigma$ is not necessarily assumed to be minimal. Likewise, Xiao \cite{xiao2016p} also proved a mass to $p$-capacity comparison in the case where $\Sigma$ is not required to be minimal. We believe our monotonicity formulas should also apply to prove this more general inequality. 
\end{remark}

% The functions $f^*$, $g^*$, and $h^*$ have the following asymptotic expansions as $t\to \infty$:
% \begin{gather*}
% f^*(t) = K_p^* e^{\frac{t}{3-p}} + (3-p) +  O(e^{-\frac{t}{3-p}}),\\
% g^*(t) = -K_p^* e^{\frac{t}{3-p}}-(3-p) - \frac{4}{3-p} + O(e^{-\frac{t}{3-p}}),\\
% h^*(t) = \frac{K_p^*}{3-p} e^{\frac{t}{3-p}} + O(e^{-\frac{t}{3-p}}).
% \end{gather*}
% Here $K_p^* = (\frac{K_p}{4\pi})^{\frac{1}{p-3}}$.  
% % Moreover, the values $f^*(0)$, $g^*(0)$, and $h^*(0)$ ensure that $Q^*(0) \ge Q^*_s(0)$ where $Q^*_s$ is the $Q^*$ function on mass 2 Schwarzschild. 
% Theorem \ref{main-theorem} is  proven by comparing $Q_*(0)$ with $\lim_{t\to\infty}Q_*(t)$ and by comparing $Q^*(0)$ with $\lim_{t\to\infty} Q^*(t)$. 

%\subsection{Organization} The remainder of the paper is organized as follows. In Section \ref{section:W-inequality}, we state and prove a differential inequality for the quantity $W$. In Section \ref{section:ODE}, we derive and solve a system of ODEs to determine the functions $f_*$, $g_*$, $h_*$ and $f^*$, $g^*$, $h^*$. In Section \ref{section:Monotonicity}, we prove the monotonicity of $Q_*$ and $Q^*$, even when the $p$-harmonic function $u$ may have critical points. Finally, in Section \ref{section:asymptotics}, we study the behavior of $Q_*$ and $Q^*$ as $t\to \infty$ and obtain the proof of Corollary \ref{main-theorem}. 

\section*{Acknowledgements}
The authors would like to thank Professor Xin Zhou for his encouragement and helpful discussions. 

X.Y. is supported by NSF grant
DMS-1945178.  
\section{A Differential Inequality for W}
\label{section:W-inequality}

In this section, we record a differential inequality satisfied by $W$ in the case where $u$ has no critical points and $\Sigma$ is a 2-sphere. Some version of this inequality is already implicitly present in \cite{agostiniani2022riemannian}, \cite{agostiniani2021green}, and \cite{oronzio2022adm}. The reader should also note that formally setting $p=1$ yields the differential inequality governing Geroch monotonicity. 

\begin{proposition}
\label{proposition:W-monotonicity}
Fix a number $1 < p < 3$. Assume that $(M^3,g)$ is a complete, asymptotically flat manifold with non-negative scalar curvature and non-empty minimal boundary $\Sigma$. Assume that $\Sigma$ is a 2-sphere and that the solution $u$ to (\ref{u-pde}) has no critical points. Then $W$ satisfies the differential inequality 
\begin{equation}
\label{W-ODE}
(p-1)(3-p)\frac{d^2W}{dt^2} \ge W - 4\pi(3-p)^2 + 2(2-p)\frac{dW}{dt} + \frac{(p-1)(5-p)}{4} \frac{\left(\frac{dW}{dt}\right)^2}{W}. 
\end{equation}
Moreover, equality holds for all $t \ge 0$ if and only if $(M,g)$ is isometric to spatial Schwarzschild with horizon boundary. 
\end{proposition}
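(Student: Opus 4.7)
The plan is to derive the inequality by combining the PDE satisfied by $w$ with Bochner's formula, the Gauss equation on level sets, and Gauss--Bonnet. Substituting $u = \exp(w/(1-p))$ into $\lap_p u = 0$ yields the PDE
\[
\lap w = |\grad w|^2 + (2-p)\frac{\grad^2 w(\grad w,\grad w)}{|\grad w|^2},
\]
from which the mean curvature of $\Sigma_t = \{w = t\}$ with respect to $\nu = \grad w/|\grad w|$ simplifies to $H = |\grad w| + (1-p)\grad^2 w(\grad w,\grad w)/|\grad w|^3$. Since $u$ has no critical points, every $\Sigma_t$ is smooth and, because $\Sigma \cong S^2$, topologically $S^2$. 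I would then differentiate $W$ via the coarea formula to express $W'(t)$ and $W''(t)$ as surface integrals over $\Sigma_t$ of Hessian and curvature quantities, using Bochner's formula
\[
\tfrac{1}{2}\lap|\grad w|^2 = |\grad^2 w|^2 + \la \grad w, \grad \lap w\ra + \ric(\grad w,\grad w)
\]
to handle the second derivative, with $\grad \lap w$ computed from the PDE above.

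The geometric content enters through the Gauss equation $2K_{\Sigma_t} = R - 2\ric(\nu,\nu) + H^2 - |A|^2$ and Gauss--Bonnet $\int_{\Sigma_t} K_{\Sigma_t}\,dA = 8\pi$, which together produce the constant $4\pi(3-p)^2$ in (\ref{W-ODE}). The technical core is then a refined Kato/Cauchy--Schwarz step bounding $|\grad^2 w|^2$ from below. Decomposing $\grad^2 w$ into its normal--normal, normal--tangent, and tangential (trace plus traceless) components relative to $\Sigma_t$, one uses the identity $H|\grad w| = \lap w - \grad^2 w(\grad w,\grad w)/|\grad w|^2$ to relate the tangential trace to $H$, then applies Cauchy--Schwarz on $\Sigma_t$ with a weight tuned so that after substituting the PDE to eliminate the radial Hessian, the resulting inequality carries exactly the coefficients appearing in (\ref{W-ODE}). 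Discarding the nonnegative term involving $R|\grad w|^{-1}$ via $R \ge 0$ then yields (\ref{W-ODE}).

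For the equality statement, saturation at every step forces $R \equiv 0$, each $\Sigma_t$ to be totally umbilic with $|\grad w|$ constant on it, and both the traceless tangential and normal--tangent parts of $\grad^2 w$ to vanish. Combined with asymptotic flatness and the horizon boundary condition, these constraints force rotational symmetry, reducing the PDE to an ODE in a radial variable whose unique solution is the Schwarzschild profile; hence $(M,g)$ is isometric to spatial Schwarzschild with horizon boundary. The main obstacle is the coefficient matching in the Kato step: the constants in (\ref{W-ODE}) are rigid, so one must identify the precise weighted decomposition of $\grad^2 w$ that produces them rather than settling for a looser inequality. A secondary subtlety is upgrading the pointwise umbilicity and scalar flatness, now holding for every $t \ge 0$, to a global isometry with Schwarzschild.
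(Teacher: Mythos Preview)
Your approach is viable and the rigidity sketch matches the paper's, but the route to the inequality differs from the paper's and there is one minor slip. The paper does not invoke Bochner's formula or a pointwise Hessian decomposition. Instead, after deriving the same formulas for $\Delta w$ and $H$, it expresses
\[
\frac{dW}{dt} = \frac{2}{p-1}W - \frac{3-p}{p-1}\int_{\Sigma_t} H|\nabla w|,
\]
then computes $\frac{d}{dt}\int_{\Sigma_t} H|\nabla w|$ via the evolution formula for mean curvature along the level-set flow (bringing in $|A|^2 + \ric(\nu,\nu)$), applies the Gauss equation and Gauss--Bonnet to produce the $4\pi$, and closes with the global H\"older step $\int_{\Sigma_t} H^2 \ge \big(\int_{\Sigma_t} H|\nabla w|\big)^2/W$. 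That single Cauchy--Schwarz on functions over $\Sigma_t$ directly generates the term $(W')^2/W$ with the exact coefficient $\tfrac{(p-1)(5-p)}{4}$; no weighted Kato tuning is needed. Your Bochner/Hessian-decomposition plan is closer to the divergence-theorem computation the paper carries out later (Section~\ref{section:Monotonicity}) for the monotonicity of $Q$; it works but requires the bookkeeping you flag, whereas the paper's level-set-evolution argument makes the constants fall out mechanically. Also, Gauss--Bonnet on a $2$-sphere gives $\int_{\Sigma_t} K_{\Sigma_t} = 4\pi$, not $8\pi$.

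For rigidity the paper proceeds as you outline: equality forces $R=0$, $|\mathring A|=0$, $|\nabla w|$ and $H$ constant on each $\Sigma_t$, hence a warped product $ds^2+\varphi(s)^2 g_0$ over round spheres; then $R=0$ gives the ODE $\varphi'' = (1-(\varphi')^2)/(2\varphi)$, which with $\varphi'(0)=0$ is uniquely solved by the Schwarzschild warping factor.
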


\begin{remark}
In fact, the differential inequality (\ref{W-ODE}) is purely local, and holds for a positive $p$-harmonic function $u$ on any Riemannian manifold with non-negative scalar curvature in a region where the level sets of $u$ are connected.
\end{remark}

\begin{proof}
As shown by Moser \cite{moser2007inverse}, the function $w$ satisfies $\lap_p w = \vert \grad w\vert^p$. Using this, one computes that 
\begin{gather}
\label{equation:lap-w}
    \lap w = \vert \grad w\vert^2 - (p-2) \frac{\la \grad \vert \grad w\vert, \grad w\ra}{\vert \grad w\vert},\\
\label{equation:H}
    H = \vert \grad w\vert - (p-1) \frac{\la \grad \vert \grad w\vert, \grad w\ra}{\vert \grad w\vert^2}.
\end{gather}
The following calculations are  similar to those in \cite{agostiniani2021green}. Let $\Sigma_t = \{w=t\}$. Note that 
\[
\frac{dW}{dt} = \int_{\Sigma_t} 2\frac{\la \grad \vert \grad w\vert,\grad w\ra}{\vert \grad w\vert} + H\vert \grad w\vert \, da = W + (3-p) \int_{\Sigma_t} \frac{\la \grad \vert \grad w\vert, \grad w\ra}{\vert \grad w\vert}\, da. 
\]
This equation implies that 
\[
\int_{\Sigma_t} \frac{\la \grad \vert \grad w\vert, \grad w\ra}{\vert \grad w\vert}\, da = \frac{1}{3-p}\left(\frac{dW}{dt}-W\right)
\]
and hence that 
\begin{equation}
\label{equation:dwdt}
    \frac{dW}{dt} = \frac{2}{p-1}W - \frac{3-p}{p-1}\int_{\Sigma_t} H\vert \grad w\vert \, da. 
\end{equation}
Next observe that 
% \begin{equation}
%     \frac{d}{dt}\int_{\Sigma_t} H\vert \grad w\vert \,da \le 4\pi + \frac{1}{p-1} \int_{\Sigma_t} H\vert \grad w\vert \, da - \frac{5-p}{4(p-1)}\int_{\Sigma_t} H^2\, da. 
% \end{equation}
\begin{align*}
\frac{d}{dt} \int_{\Sigma_t} H\vert \grad w\vert \,da &= \int_{\Sigma_t} H \frac{\la \grad \vert \grad w\vert,\grad w\ra}{\vert \grad w\vert^2} + H^2 - \vert \grad w\vert\left( \lap_{\Sigma_t} \frac{1}{\vert \grad w\vert} + \left(\vert A\vert^2 + \ric(\nu,\nu)\right)\frac{1}{\vert \grad w\vert}\right)\, da\\
&= \int_{\Sigma_t} H \frac{\la \grad \vert \grad w\vert,\grad w\ra}{\vert \grad w\vert^2} + H^2 -   \frac{\vert \grad^{\Sigma_t} \vert \grad w\vert\vert^2}{\vert \grad w\vert^2} - \vert A\vert^2 - \ric(\nu,\nu)\, da. 
\end{align*}
By the Gauss equations, we have 
\begin{align*}
\int_{\Sigma_t} \vert A\vert^2 + \ric(\nu,\nu) \, da &= \int_{\Sigma_t} \frac{R}{2} - \frac{R_{\Sigma_t}}{2} + \frac{1}{2}H^2 + \frac{1}{2}\vert A\vert^2\, da = \int_{\Sigma_t} \frac{R}{2} - \frac{R_{\Sigma_t}}{2} +\frac{3}{4}H^2 + \frac{1}{2} \vert \mathring{A}\vert^2\, da. 
\end{align*}
Note that the assumption that $u$ has no critical points guarentees that each level set $\Sigma_t$ is topologically a 2-sphere. Thus we can use Gauss-Bonnet together with the previous equations to deduce that
\begin{align}
\label{equation:d2wdt2}
\frac{d}{dt} \int_{\Sigma_t} H\vert \grad w\vert \, da &\le 4\pi + \int_{\Sigma_t} \frac{1}{4}H^2+H \frac{\la \grad \vert \grad w\vert,\grad w\ra}{\vert \grad w\vert^2}\, da\\
\nonumber
&= 4\pi + \frac{1}{p-1}\int_{\Sigma_t} H \vert \grad w\vert \,da - \frac{(5-p)}{4(p-1)}\int_{\Sigma_t} H^2\, da. 
\end{align}
%&\le 4\pi + \int_{\Sigma_t} \frac{1}{4}H^2 + H \frac{\vert \grad \vert \grad w\vert\vert}{\vert \grad w\vert} + \frac{\vert \grad \vert \grad w\vert \vert^2}{\vert \grad w\vert^2} - \frac{\nu(\vert \grad w\vert)^2}{\vert \grad w\vert^2}\, da.
% \end{align*}
By (\ref{equation:dwdt}) and (\ref{equation:d2wdt2}) and H\"older's inequality, one thus obtains that 
\begin{align*}
\frac{d^2W}{dt^2} &\ge -\frac{3-p}{p-1}\left(4\pi + \frac{1}{p-1}\int_{\Sigma_t} H\vert \grad w\vert \,da - \frac{5-p}{4(p-1)}\int_{\Sigma_t} H^2 \,da\right) + \frac{2}{p-1} \frac{dW}{dt}\\
&= -\frac{4\pi(3-p)}{(p-1)}-\frac{3(3-p)}{(p-1)^2}\int_{\Sigma_t} H\vert \grad w\vert\, da + \frac{4}{(p-1)^2}\int_{\Sigma_t} \vert \grad w\vert^2\, da + \frac{(3-p)(5-p)}{4(p-1)^2}\int_{\Sigma_t} H^2\,da \\
&\ge -\frac{4\pi(3-p)}{p-1} -\frac{3(3-p)}{(p-1)^2}\int_{\Sigma_t} H\vert \grad w\vert\, da + \frac{4}{(p-1)^2}\int_{\Sigma_t} \vert \grad w\vert^2\, da + \frac{(3-p)(5-p)}{4(p-1)^2}\frac{(\int_{\Sigma_t} H\vert \grad w\vert\, da)^2}{\int_{\Sigma_t} \vert \grad w\vert^2\, da} \\
&= -\frac{4\pi(3-p)}{p-1}+\frac{1}{(p-1)(3-p)}W + \frac{2(2-p)}{(p-1)(3-p)}\frac{dW}{dt} + \frac{5-p}{4(3-p)}\frac{(\frac{dW}{dt})^2}{W}.
\end{align*}
Multiplying by $(p-1)(3-p)$ gives the inequality in the statement of the proposition. 

It remains to prove rigidity. In Schwarzschild, the level sets $\Sigma_t$ are coordinate spheres and $\vert \grad w\vert$ and $H$ are both constant on each $\Sigma_t$. Moreover, we have $R = 0$ and $\vert \mathring{A}\vert = 0$. Thus all the inequalities in the above calculation become equality. Conversely, suppose that equality holds in (\ref{W-ODE}) for all $t \ge 0$.  The vanishing of $\vert \grad^{\Sigma_t} \vert \grad w\vert \vert$ implies that $\vert \grad w\vert$ is constant on each $\Sigma_t$ and so $\vert \grad w\vert = f(w)$. Equality in H\"older's inequality implies that $H$ is proportional to $\vert \grad w\vert$ on $\Sigma_t$ and therefore $H = h(w)$ is also constant on each $\Sigma_t$.   Moreover, we have $R = 0$ and $\vert \mathring{A} \vert = 0$. Since 
\[
h'(w) = -\frac{\vert A\vert^2 + \ric(\nu,\nu)}{f(w)},
\]
it follows that $\ric(\nu,\nu)$ is constant on each $\Sigma_t$. By the Gauss equations, this implies that $R_{\Sigma_t}$ is constant on each $\Sigma_t$, and hence that each $\Sigma_t$ is isometric to a round sphere.  Thus $(M,g)$ is isometric to a warped product 
\[
\left([0,\infty)\times S^2, ds^2 + \varphi(s)^2 \, dg_0\right)
\]
where $g_0$ is the round metric on the unit sphere. Moreover, the slice $\{0\}\times S^2$ is minimal. 
% {\red I think Schwarzschild is the only warped product like this with zero scalar curvature. This should follow from uniqueness for some ODE. See the section on warped products in Petersen's book. Bray and Neves also claim something similar in section 4 of their paper. Maybe this is the easiest way to handle this?}
Since $R = 0$, the warping function $\varphi$ solves 
\[
\varphi'' = \frac{1-(\varphi')^2}{2 \varphi}. 
\]
See Petersen \cite{petersen2006riemannian} Chapter 3 Section 2 for a detailed derivation of this equation. Note that $\varphi'(0) = 0$ since $\Sigma$ is minimal and that $\varphi(0) = \sqrt{\area(\Sigma)/4\pi}$. Now let $(M_s,g_s)$ be the Schwarzschild manifold whose horizon $\Sigma_s$ satisfies $\area(\Sigma_s) = \area(\Sigma)$. Note that $(M_s,g_s)$ is  isometric to a warped product 
\[
\left([0,\infty)\times S^2,ds^2 + \psi(s)^2 \, dg_0\right),
\]
and the warping function $\psi$ also solves 
\[
\psi'' = \frac{1-(\psi')^2}{2\psi}
\]
with initial conditions $\psi'(0)=0$ and $\psi(0)=\sqrt{\area(\Sigma)/4\pi}$. It follows that $\varphi = \psi$ by uniqueness of positive solutions to the ODE. Therefore $(M,g)$ is isometric to $(M_s,g_s)$, as needed. 
\end{proof}
% Equation (\ref{equation:H}) gives that 
% \[
% h(w) = f(w) - (p-1)f'(w).
% \]
% and using the fact $h(w)$ is propotional to $f(w)$, we obtain
% \begin{align*}
%     f(w)=ce^{\frac{1-k}{p-1}w},
% \end{align*}
% where $h(w)=kf(w)$,$c$ is some positive constant.

% Since $f(w)-(3-p)f'(w)=0$ ,  we can write the metric as
% \begin{align*}
%     g=f(w)^2dw^2+h(w,\theta)_{ij}d\theta_id\theta_j.
% \end{align*}

% The equality also requires $A_{ij}=\frac{H}{2}g_{ij}$, we thus obtain
% \begin{align*}
%     \frac{\partial h_{ij}}{\partial w}=\frac{2}{|\nabla w|}A_{ij}=\frac{H}{|\nabla w|}h_{ij}=kh_{ij},
% \end{align*}
% which gives
% \begin{align*}
%     h(w,\theta)=c^{-2}e^{kw}h_0(\theta)
% \end{align*}
%  and we can then write $g=c^{-2}g_0$, where
%  \begin{align*}
%      g_0=e^{\frac{2(1-k)w}{p-1}}dw^2+e^{kw}h(\theta)_{ij}d\theta_id\theta_j
%  \end{align*}
% Since $R_g=0$, we obtain
% \begin{align*}
%     s
% \end{align*}
% which implies
% \begin{align*}
%     R_h=
% \end{align*}

\section{ODE Analysis on Schwarzschild}
\label{section:ODE}

Consider a complete, asymptotically flat manifold $(M^3,g)$ with non-negative scalar curvature. Assume that $M$ has non-empty, minimal boundary $\Sigma$. Assume additionally that 
\begin{equation} 
\label{no-critical-assumption}
\text{$\Sigma$ is a 2-sphere and the $p$-harmonic function $u$ satisfying (\ref{u-pde}) has no critical points.}
\end{equation}
We would like to find functions $f(t),g(t),h(t)$ so that the quantity 
\[
Q(t) = 4\pi(3-p)^2 f(t) + g(t)W(t) + (p-1)(3-p) h(t) \frac{dW}{dt}
\]
is non-decreasing, and constant when $M$ is Schwarzschild. In a later section, we will remove the simplifying assumption (\ref{no-critical-assumption}). 

\begin{proposition}
\label{proposition:monotone-ODE}
Suppose the functions $f,g,h\f [0,\infty)\to \R$ satisfy 
\begin{equation}
\label{fgh-ODE-t}
\begin{cases}
\left(\frac{dg}{dt} + h\right)W_s^2 + \left(g - 2(p-2)h +(p-1)(3-p)\frac{dh}{dt}\right)W_s\frac{dW_s}{dt} +  \frac{(p-1)(5-p)}{4}h\left(\frac{dW_s}{dt}\right)^2=0\\
\left(\frac{dg}{dt}+h\right)W_s^2 - \frac{(p-1)(5-p)}{4}h\left(\frac{dW_s}{dt}\right)^2 = 0,\\
\frac{df}{dt} = h,
\end{cases}
\end{equation}
where $W_s$ is the $W$ function in the model case of mass 2 Schwarzschild.  Then 
\[
Q_s(t) = 4\pi(3-p)^2 f(t) + g(t)W_s(t) + (p-1)(3-p)h(t)\frac{dW_s}{dt}(t)
\]
is constant. If, in addition, $h$ and $\frac{dg}{dt}+h$ are non-negative, then 
\[
Q(t) = 4\pi(3-p)^2 f(t) + g(t)W(t) + (p-1)(3-p)h(t)\frac{dW}{dt}(t)
\]
is non-decreasing on any asymptotically flat manifold $(M^3,g)$ with non-negative scalar curvature and $p$-harmonic function $u$ satisfying (\ref{no-critical-assumption}). 
\end{proposition}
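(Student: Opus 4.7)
My plan is to differentiate $Q(t)$ directly, use Proposition \ref{proposition:W-monotonicity} to eliminate the second derivative $\frac{d^2W}{dt^2}$ (as equality on Schwarzschild, as inequality in general), and then recognize the remaining quadratic expression in $W$ and $\frac{dW}{dt}$ as a perfect square via the ODE system (\ref{fgh-ODE-t}).

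Computing directly,
\[
\frac{dQ}{dt} = 4\pi(3-p)^2 \frac{df}{dt} + \frac{dg}{dt}\, W + g\,\frac{dW}{dt} + (p-1)(3-p)\frac{dh}{dt}\,\frac{dW}{dt} + (p-1)(3-p) h\,\frac{d^2W}{dt^2}.
\]
The third ODE $\frac{df}{dt} = h$ is engineered so that when we multiply (\ref{W-ODE}) by $h \ge 0$ and substitute, the two $4\pi(3-p)^2$ contributions cancel exactly. After multiplying through by the positive quantity $W$, this yields
\[
W\,\frac{dQ}{dt} \ge A\,W^2 + B\,W\,\frac{dW}{dt} + C\left(\frac{dW}{dt}\right)^2,
\]
where I have set $A = \frac{dg}{dt} + h$, $B = g - 2(p-2)h + (p-1)(3-p)\frac{dh}{dt}$, and $C = \frac{(p-1)(5-p)}{4} h$. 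These are precisely the three coefficients appearing in the first equation of (\ref{fgh-ODE-t}). On Schwarzschild, (\ref{W-ODE}) holds with equality, so the identical computation gives $W_s\,\frac{dQ_s}{dt} = A\,W_s^2 + B\,W_s\,\frac{dW_s}{dt} + C\,\left(\frac{dW_s}{dt}\right)^2 = 0$ by the first ODE of (\ref{fgh-ODE-t}); this proves that $Q_s$ is constant.

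For the monotonicity claim on a general $(M,g)$, it remains to show the quadratic form on the right-hand side is non-negative under the stated sign hypotheses. This is exactly what the second ODE of (\ref{fgh-ODE-t}) is for: that equation says $A\,W_s^2 = C\,(dW_s/dt)^2$, so $A = C\beta^2$ for $\beta(t) := (dW_s/dt)/W_s$; plugging this into the first ODE yields $B = -2C\beta$. Consequently,
\[
A\,W^2 + B\,W\,\frac{dW}{dt} + C\left(\frac{dW}{dt}\right)^2 = C\left(\frac{dW}{dt} - \beta(t)\,W\right)^2,
\]
which is non-negative whenever $C \ge 0$, i.e., whenever $h \ge 0$. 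Dividing by $W > 0$ gives $\frac{dQ}{dt} \ge 0$, as claimed.

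The only genuinely nontrivial point is the algebraic observation that the two ODEs in (\ref{fgh-ODE-t}) together force the coefficient triple $(A, B, C)$ into the perfect-square pattern $(C\beta^2, -2C\beta, C)$; once this is noticed, both claims follow by direct substitution. The system (\ref{fgh-ODE-t}) has clearly been reverse-engineered so that this factorization takes place, which is exactly what upgrades Schwarzschild rigidity (first ODE) into a true monotonicity statement (first + second ODE).
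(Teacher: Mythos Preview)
Your proof is correct and follows essentially the same approach as the paper: differentiate $Q$, invoke Proposition~\ref{proposition:W-monotonicity} (equality on Schwarzschild, inequality in general with $h\ge 0$), and then identify the resulting quadratic $AW^2 + BW\frac{dW}{dt} + C(\frac{dW}{dt})^2$ as a perfect square using the ODE system. The only cosmetic difference is that the paper writes the factorization as $\big(W\sqrt{A} - \frac{dW}{dt}\sqrt{C}\big)^2$ after taking square roots in the second equation, whereas you parametrize via $\beta = \frac{dW_s/dt}{W_s}$ to write it as $C\big(\frac{dW}{dt} - \beta W\big)^2$; your version has the minor advantage of handling the sign of $\frac{dW_s}{dt}$ automatically and making clear that $h\ge 0$ already forces $\frac{dg}{dt}+h\ge 0$ via the second ODE.
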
 

\begin{proof}
Using Proposition \ref{proposition:W-monotonicity} and the third equation satisfied by $f$ and $h$, one computes that 
\begin{align*}
Q'(t) &= 4\pi(3-p)^2 \frac{df}{dt} + \frac{dg}{dt}W  + g\frac{dW}{dt} + (p-1)(3-p)\frac{dh}{dt}\frac{dW}{dt} + (p-1)(3-p)h \frac{d^2W}{dt^2}\\
&\ge 4\pi(3-p)^2\left(\frac{df}{dt} -h\right) + \left(\frac{dg}{dt} + h\right)W + \frac{(p-1)(5-p)}{4}h\frac{\left(\frac{dW}{dt}\right)^2}{W}\\
& \qquad + \left(g - 2(p-2)h +(p-1)(3-p)\frac{dh}{dt}\right)\frac{dW}{dt}\\
& = \frac{1}{W}\left[\left(\frac{dg}{dt} + h\right)W^2 + \left(g - 2(p-2)h +(p-1)(3-p)\frac{dh}{dt}\right)W \frac{dW}{dt} + \frac{(p-1)(5-p)}{4}h\left(\frac{dW}{dt}\right)^2\right].
\end{align*}
Substituting the first equation satisfied by $g$ and $h$ into this inequality for $Q'$
and using that $(\ref{W-ODE})$ is equality on Schwarzschild then shows that $Q_s' \equiv  0$ on mass 2 Schwarzschild. This proves the first assertion of the proposition. 

Further assume that $h$ and $\frac{dg}{dt}+h$ are non-negative. Then re-arranging the second equation for $g$ and $h$ and taking the square root shows that 
\[
W_s\sqrt{\frac{dg}{dt}+h} = \frac{dW_s}{dt}\sqrt{\frac{(p-1)(5-p)}{4}h} . 
\]
It follows that 
\begin{align*}
0 &= \left(W_s\sqrt{\frac{dg}{dt}+h} - \frac{dW_s}{dt}\sqrt{\frac{(p-1)(5-p)}{4}h}\right)^2 \\
&= \left(\frac{dg}{dt} + h\right)W_s^2 - 2W_s\frac{dW_s}{dt}\sqrt{\frac{dg}{dt}+h} \sqrt{\frac{(p-1)(5-p)}{4}h}+  \frac{(p-1)(5-p)}{4}h\left(\frac{dW_s}{dt}\right)^2.
\end{align*}
Combined with the first equation for $g$ and $h$, this implies that 
\begin{equation}
\label{perfect-square}
g-2(p-2)h+(p-1)(3-p)\frac{dh}{dt}=-2\sqrt{\frac{dg}{dt}+h} \sqrt{\frac{(p-1)(5-p)}{4}h}
\end{equation}
and therefore that 
\begin{align*}
&\left(\frac{dg}{dt} + h\right)W^2 + \left(g - 2(p-2)h +(p-1)(3-p)\frac{dh}{dt}\right)W\frac{dW}{dt} +  \frac{(p-1)(5-p)}{4}h\left(\frac{dW}{dt}\right)^2\\
&\qquad =\left(W\sqrt{\frac{dg}{dt}+h} - \frac{dW}{dt}\sqrt{\frac{(p-1)(5-p)}{4}h}\right)^2 \ge 0,
\end{align*}
for the $W$ function on any asymptotically flat manifold $(M,g)$ satisfying the assumptions of the proposition. 
Substituting this into the previous inequality for $Q'(t)$ proves that $Q'(t) \ge 0$ for any $(M,g)$ with $p$-harmonic function $u$ satisfying the assumptions of the proposition. 
\end{proof}

\subsection{Asymptotic Expansions on Schwarzschild} In order to solve the system of ODEs (\ref{fgh-ODE-t}), we need to perform some explicit calculations with $p$-harmonic functions on the Schwarzschild manifold. Let
\[
(M_s, g_s) = \left(\R^3\setminus B_1(0), \left(1+\frac 1 r\right)^4 \delta_{ij}\right)
\]
be the spatial Schwarzschild manifold of mass 2. Note that $\Sigma_s = \bd B_1$ is a minimal surface with respect to this metric. Fix some number $1< p < 3$. We first aim to get a detailed asymptotic expansion of the solution to 
\[
\begin{cases}
\lap_p u_s = 0, &\text{on } (M_s,g_s)\\
u_s = 1, &\text{on } \Sigma_s,\\
u_s\to 0, &\text{at infinity}.
\end{cases}
\]
Note that by symmetry the solution $u_s=u_s(r)$ must be radial. Thus the problem reduces to solving the ODE 
\begin{equation}
\label{ode-p-harmonic}
(p-1)u_s''(r) + \left(\frac 2 r + \frac{2(p-3)}{r+r^2}\right)u_s'(r) = 0
\end{equation}
with boundary conditions $u_s(1) = 1$ and $u_s(r)\to 0$ as $r\to \infty$.
There is an explicit solution to this ODE in terms of hypergeometric functions. However, this offers little advantage over an asymptotic expansion. 

Thus we proceed to derive an asymptotic expansion for $u_s$ near infinity. Note that near infinity we have an expansion 
\begin{equation}
\label{expand1} 
\frac{2}{r} + \frac{2(p-3)}{r+r^2} = \frac{2}{r} - \frac{2(3-p)}{r^2}  + O(r^{-3}). 
\end{equation}
From this, we see that the ODE (\ref{ode-p-harmonic}) has a regular singular point at infinity. The equation for the indicial roots is 
\[
(p-1)\alpha(\alpha-1) + 2\alpha = 0,
\]
and this has solutions $\alpha = 0$ and $\alpha = -\frac{3-p}{p-1}$. Therefore, according to the method of Frobenius, there are two convergent series solutions 
\begin{gather*}
u_1(r) = 1 + \frac{a_1}{r} + \frac{a_2}{r^2} + \hdots,\\
u_2(r) = r^{-\frac{3-p}{p-1}}\left(1 + \frac{b_1}{r} + \frac{b_2}{r^2} + \hdots \right)
\end{gather*}
to the ODE (\ref{ode-p-harmonic}) near infinity. 
Since $u_s\to 0$ at infinity, it must be a constant multiple of $u_2$. Substituting the formula for $u_2$ into the ODE and using the expansion (\ref{expand1}), we compute that 
\[
b_1 = -\frac{(3-p)^2}{p-1}.
% \quad b_2 = \frac{(3-p)^2(5-p)}{(p-1)^2(p+1)}.
\]
Therefore, the function $u_s$ satisfies
\begin{equation}
\label{model-u-asy}
u_s(r) = c_{p,s} r^{-\frac{3-p}{p-1}}\left(1-\frac{(3-p)^2}{p-1}\frac{1}{r} %+ \frac{(3-p)^2(5-p)}{(p-1)^2(p+1)}\frac{1}{r^2} 
+ O(r^{-2})\right),
\end{equation}
where $c_{p,s}$ is chosen so that $u_s(1) = 1$. 

\begin{remark}
It is known (see \cite{benatti2022asymptotic} for example) that 
\[
c_{p,s} = \left(\frac{1}{4\pi} \int_{\Sigma_s} \vert \grad^s u_s\vert^{p-1} \, da_s \right)^{\frac{1}{p-1}}. 
\]
\end{remark}

\begin{remark} Technically the method of Frobenius is only guaranteed to succeed if the difference between the indicial roots is not an integer. However, since this is the case for an open, dense set of $p$, such solutions will in fact exist for all $p$ by continuity.
\end{remark} 

Next we compute the expansion of various other quantities related to $u_s$. First note that 
\[
u_s'(r) = c_{p,s} r^{-\frac{2}{p-1}}\left(-\frac{3-p}{p-1} + \frac{2(3-p)^2}{(p-1)^2}\frac{1}{r} %- \frac{(3-p)^2(5-p)}{(p-1)^3}\frac{1}{r^2} 
+ O(r^{-2}) \right).
\]
Define $w_s = (1-p)\log u_s$. Then the Euclidean gradient of $w_s$ satisfies 
\[
\vert \grad w_s\vert = (p-1)\frac{\vert \grad u_s\vert}{u_s} =  r^{-1}\left((3-p) - (3-p)^2\frac{1}{r} + O(r^{-2})\right). 
\]
Therefore the function 
\[
W_s(r) = \int_{\bd B_r} \vert  \grad^s w_s\vert^2 \, d a_s = \int_{\bd B_r} \vert \grad w_s\vert^2 \, da 
\]
satisfies 
\begin{gather*}
W_s(r) = 4\pi (3-p)^2\left(1 - 2(3-p) \frac{1}{r} + O(r^{-2})\right),\\
\frac{dW_s}{dr}(r) = 8\pi(3-p)^3 \frac{1}{r^2} + O(r^{-3}).
\end{gather*}
Finally define a new variable $t$ related to $r$ by $t = (1-p)\log u_s(r)$. Then 
\[
\frac{dt}{dr} = (1-p)\frac{u_s'(r)}{u_s(r)} = r^{-1}\left((3-p) - (3-p)^2 \frac{1}{r} + O(r^{-2})\right),
\]
and it follows that 
\[
\frac{dr}{dt} = r\left(\frac{1}{3-p}+ \frac{1}{r} + O(r^{-2})\right). 
\]
With these expansions at hand, we now solve the system of ODEs to determine a monotone quantity. 

Since the requirement $\frac{df}{dt} = h$ is easily satisfied, we focus attention on the first pair of equations:
\begin{equation}
\label{gh-ODE-t}
\begin{cases} 
\left(\frac{dg}{dt} + h\right)W_s^2 + \left(g - 2(p-2)h +(p-1)(3-p)\frac{dh}{dt}\right)W_s\frac{dW_s}{dt} +  \frac{(p-1)(5-p)}{4}h\left(\frac{dW_s}{dt}\right)^2=0\\ 
\left(\frac{dg}{dt}+h\right)W_s^2 - \frac{(p-1)(5-p)}{4}h\left(\frac{dW_s}{dt}\right)^2 = 0.
\end{cases}
\end{equation}
In terms of the coordinate $t = (1-p)\log(u_s(r))$ on mass 2 Schwarzschild, this system becomes 
\begin{equation}
\label{gh-ODE-r}
\begin{cases}
\left(\frac{dg}{dr}\frac{dr}{dt} + h\right)W_s^2 + \left(g - 2(p-2)h +(p-1)(3-p)\frac{dh}{dr}\frac{dr}{dt}\right)W_s\frac{dW_s}{dr}\frac{dr}{dt} +  \frac{(p-1)(5-p)}{4}h\left(\frac{dW_s}{dr}\frac{dr}{dt}\right)^2=0,\\
\left(\frac{dg}{dr}\frac{dr}{dt}+h\right)W_s^2 - \frac{(p-1)(5-p)}{4}h\left(\frac{dW_s}{dr}\frac{dr}{dt}\right)^2 = 0.
\end{cases}
\end{equation}
This can be re-written as 
\[
\begin{pmatrix}
\frac{dg}{dr} \vspace{2mm} \\  \frac{dh}{dr} 
\end{pmatrix} = \begin{pmatrix} 0 & a(r)\\ b(r) & c(r)\end{pmatrix} \begin{pmatrix} g \\ h\end{pmatrix} 
\]
where 
\begin{gather*}
a(r) = \frac{1}{\frac{dr}{dt}}\left[\frac{(p-1)(5-p)}{4W_s^2}\left(\frac{dW_s}{dr}\frac{dr}{dt}\right)^2 - 1\right], \quad 
b(r) = \frac{-1}{(p-1)(3-p)\frac{dr}{dt}},\\
c(r) = \frac{2(p-2)}{(p-1)(3-p)\frac{dr}{dt}}
 -\frac{(5-p)}{2(3-p)W_s}\frac{dW_s}{dr}.
 \end{gather*}
 Since $a,b,c$ are smooth functions, the theory of linear ODEs implies that there is a 2-dimensional family of solutions defined for all $r\in [1,\infty)$. 
 
To compute the asymptotic expansion of the solutions, it is convenient to transform the above system into a single second order equation for $g$. Indeed, the above system is equivalent to 
\begin{equation}
\label{g-ODE}
\frac{d^2g}{dr^2} = \left(\frac{1}{a}\frac{da}{dr} + c\right)\frac{dg}{dr} + a b g.
\end{equation}
Using the previously computed asymptotic expansions for $W_s$, $\frac{dW_s}{dr}$, and $\frac{dr}{dt}$, one computes that 
\begin{gather*}
a(r) = -(3-p)r\inv + (3-p)^2 r^{-2}  + O(r^{-3}),\\
b(r) = -\frac{1}{p-1}r\inv + \frac{3-p}{p-1}r^{-2} + O(r^{-3}),\\
c(r) = \frac{2(p-2)}{p-1}r\inv - \left[\frac{2(p-2)(3-p)}{p-1}+{(5-p)}\right]r^{-2} + O(r^{-3})\\
\frac{1}{a}\frac{da}{dr} = -r^{-1} +(3-p)r^{-2} + O(r^{-3}).
\end{gather*}
Therefore, equation (\ref{g-ODE}) has a regular singular point at infinity. The equation for the indicial roots is 
\[
\alpha(\alpha-1) = -\frac{3-p}{p-1}\alpha + \frac{3-p}{p-1},
\]
and this has solutions $\alpha = 1$ and $\alpha = - \frac{3-p}{p-1}$.  Therefore, according to the method of Frobenius, there are two convergent series solutions 
\begin{gather*}
g_1(r) = r\left(-1+\frac{a_1}{r}+\frac{a_2}{r^2} + \hdots \right),\\
g_2(r) = r^{-\frac{3-p}{p-1}}\left(-1+\frac{b_1}{r}+\frac{b_2}{r^2} + \hdots\right). 
\end{gather*}
We will now investigate some further properties of these solutions.

\subsection{The Decaying Solution} First we investigate the decaying solution $g_2$. Let 
\[
h_2 = \frac{1}{a} \frac{dg_2}{dr} = r^{-\frac{3-p}{p-1}}\left(\frac{1}{p-1} + O(r\inv)\right)
\] 
so that $g_2$ and $h_2$ solve the system of ODEs. 

\begin{proposition}
We have that $h_2 > 0$ for all  $r\in[1,\infty)$, and $\frac{dg_2}{dr}\frac{dr}{dt} + h_2 > 0$ for all $r\in [1,\infty)$. 
\end{proposition}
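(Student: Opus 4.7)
The proposition has two claims, and I would handle them in turn, saving the technical work for the first.

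The first claim is that $h_2 > 0$ on $[1,\infty)$. The Frobenius expansion of $g_2$ derived in the preceding paragraphs, combined with the leading term $a(r) = -(3-p)r^{-1} + O(r^{-2})$, produces the expansion $h_2(r) = a(r)^{-1}\,dg_2/dr = r^{-(3-p)/(p-1)}\bigl((p-1)^{-1} + O(r^{-1})\bigr)$, so $h_2(r)>0$ for $r$ sufficiently large. To extend positivity down to $r=1$, I would argue by contradiction. Suppose $h_2$ is not strictly positive on all of $[1,\infty)$, and set $r_0 = \sup\{r \geq 1 : h_2(r) \le 0\}$; then $r_0 < \infty$, $h_2(r_0)=0$ by continuity, and $h_2 > 0$ on $(r_0,\infty)$. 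The first ODE $dg_2/dr = a h_2$ forces $(dg_2/dr)(r_0)=0$. The second ODE $dh_2/dr = b g_2 + c h_2$ at $r_0$ reads $(dh_2/dr)(r_0)=b(r_0)g_2(r_0)$, and the left side is non-negative since $h_2$ vanishes at $r_0$ and is positive immediately to the right; combined with $b(r)<0$ for all $r$ (since $dr/dt>0$), this forces $g_2(r_0)\le 0$.

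Now I would invoke the second-order equation (\ref{g-ODE}) for $g_2$. At $r_0$ the term with $dg_2/dr$ drops, leaving $(d^2g_2/dr^2)(r_0) = a(r_0)b(r_0)\,g_2(r_0)$. Since $ab>0$ (both factors negative), the sign of $(d^2g_2/dr^2)(r_0)$ matches the sign of $g_2(r_0)$. The case $g_2(r_0)=0$ is excluded by uniqueness for the linear system: then $(g_2,h_2)(r_0)=(0,0)$ and $(g_2,h_2)\equiv 0$, contradicting the nontrivial leading Frobenius term. The remaining case $g_2(r_0)<0$ makes $r_0$ a strict local maximum of $g_2$. Since $g_2\to 0^{+}$ at infinity (the sign matched to $h_2 > 0$ at infinity through $h_2 = a^{-1}dg_2/dr$ and $a<0$), the function $g_2$ must cross from negative to positive on $(r_0,\infty)$, so in particular has an interior local minimum at some $r_1 \in (r_0,\infty)$ with $g_2(r_1)<0$. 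Applying the same formula at $r_1$ yields $(d^2g_2/dr^2)(r_1) = a(r_1)b(r_1)g_2(r_1) < 0$, contradicting the local minimum condition.

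The second claim is now immediate. Rewriting the second equation of (\ref{fgh-ODE-t}) gives the pointwise identity
\[
\left(\frac{dg_2}{dr}\frac{dr}{dt} + h_2\right) W_s^2 = \frac{(p-1)(5-p)}{4}\, h_2 \left(\frac{dW_s}{dt}\right)^2,
\]
valid on $[1,\infty)$. Once $h_2 > 0$ is established, the right-hand side is strictly positive because $W_s>0$, $h_2>0$, and $dW_s/dt > 0$ on Schwarzschild (visible directly from the expansion $W_s(r)=4\pi(3-p)^2(1-2(3-p)/r+O(r^{-2}))$ together with $dr/dt>0$); hence $(dg_2/dr)(dr/dt) + h_2 > 0$ throughout.

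The main obstacle is the Sturm-type sign analysis in the second paragraph: one must rule out an interior critical point of $g_2$ without explicit formulas for the solution. The structural fact that makes the argument succeed is $ab>0$, which fixes the concavity at every critical point of $g_2$ to match $\operatorname{sgn}(g_2)$, so a first interior critical point $r_0$ with $g_2(r_0)\le 0$ can only be a local maximum, and then the required return of $g_2$ to $0^+$ at infinity is obstructed by a second critical point. All other steps reduce to bookkeeping with the already computed expansions and with the algebraic relations between the two first-order equations of (\ref{fgh-ODE-t}).
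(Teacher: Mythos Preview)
Your Sturm-type argument is genuinely different from the paper's conserved-quantity proof, but it contains a real gap in the first claim. The assertion that ``$ab>0$ (both factors negative)'' is false: while $b<0$ throughout, $a$ changes sign on $[1,\infty)$. Indeed, from the definition of $a$ one has
\[
a(r)\,\frac{dr}{dt}+1=\frac{(p-1)(5-p)}{4W_s^2}\left(\frac{dW_s}{dt}\right)^2,
\]
and at $t=0$ the minimal boundary gives $\frac{dW_s}{dt}(0)=\frac{2}{p-1}W_s(0)$, so $a(1)\frac{dr}{dt}(1)=\frac{5-p}{p-1}-1=\frac{2(3-p)}{p-1}>0$, i.e.\ $a(1)>0$; whereas $a(r)\sim-(3-p)/r<0$ near infinity.

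This sign issue is fatal at the second critical point. The identity $(d^2g_2/dr^2)(r_*)=a(r_*)b(r_*)g_2(r_*)$ at a critical point $r_*$ of $g_2$ follows from differentiating $dg_2/dr=ah_2$ only when $h_2(r_*)=0$; otherwise $(d^2g_2/dr^2)(r_*)=a'(r_*)h_2(r_*)$. At your putative local minimum $r_1>r_0$ you have $h_2(r_1)>0$ by construction, so $dg_2/dr(r_1)=a(r_1)h_2(r_1)=0$ forces $a(r_1)=0$, and then $(d^2g_2/dr^2)(r_1)=a'(r_1)h_2(r_1)$ rather than $a(r_1)b(r_1)g_2(r_1)$. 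The sign of $a'(r_1)$ is not controlled by $g_2$, so the desired contradiction does not appear. (The second-order equation (\ref{g-ODE}) is singular precisely where $a=0$, so it cannot be invoked there either.) The paper sidesteps all of this by introducing $f_2(r)=-\int_r^\infty h_2/(dr/dt)\,ds$ and using that the associated $Q_s$ is constant and tends to $0$: evaluating $Q_s\equiv 0$ at $\rho$ (your $r_0$) gives $4\pi(3-p)^2 f_2(\rho)+g_2(\rho)W_s(\rho)=0$, impossible since $f_2(\rho)<0$ and $g_2(\rho)\le 0$.

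For the second claim, your rewriting via the second equation of (\ref{fgh-ODE-t}) is essentially the paper's observation $\frac{dg_2}{dr}\frac{dr}{dt}+h_2=(a\frac{dr}{dt}+1)h_2$. One small point: the asymptotic expansion of $W_s$ at infinity does not by itself establish $\frac{dW_s}{dt}>0$ on all of $[1,\infty)$, so the \emph{strict} positivity there needs an additional global justification.
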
 

\begin{proof}
It is clear from the asymptotics that $h_2 > 0$ in a neighborhood of infinity. Suppose for contradiction that $h_2(r)\le 0$ somewhere on $[1,\infty)$.   Let 
$
\rho = \sup\{r\in[1,\infty): h_2(r) \le 0\}.
$
Note that 
\[
h_2(\rho) = 0 \quad\text{and} \quad \frac{dh_2}{dr}(\rho) \ge 0.
\]
 Substituting $h_2(\rho)=0$ into the ODE system (\ref{gh-ODE-r}), we see that
\begin{gather*}
\quad \frac{dg_2}{dr}(\rho) = 0, \\ \quad g_2(\rho) + (p-1)(3-p)\frac{dh_2}{dr}(\rho) \frac{dr}{dt}(\rho) = 0.
\end{gather*}
In particular, substituting $\frac{dh_2}{dr}(\rho)\ge 0$ into the second equation above, we deduce that  $g_2(\rho) \le 0$.  Now define 
\[
f_2(r) = -\int_r^\infty \frac{h_2(s)}{\frac{dr}{dt}(s)} \, ds   
\]
which is convergent by the asymptotic expansion for $h_2$. Observe that $f_2(r) < 0$ for $r\ge \rho$. We have 
\[
\frac{df_2}{dr}\frac{dr}{dt} =  h_2
\]
and therefore the quantity 
\[
Q_s(r) = 4\pi(3-p)^2 f_2(r) + g_2(r) W_s(r) + (p-1)(3-p)h_2(r)\frac{dW_s}{dr}\frac{dr}{dt}
\]
is constant. Since $Q_s(r)\to 0$ as $r\to \infty$, it follows that $Q_s(r) \equiv 0$. Thus 
\[
0 = Q_s(\rho) = 4\pi(3-p)^2 f_2(\rho) + g_2(\rho)W_s(\rho),
\]
and this is a contradiction since $f_2(\rho) < 0$ and $g_2(\rho) \le 0$. This proves that $h_2 > 0$ on $[1,\infty)$. 
Next observe that 
\[
\frac{dg_2}{dr}\frac{dr}{dt} + h_2 = \left(a\frac{dr}{dt}+1\right)h_2
\]
and so $\frac{dg_2}{dr}\frac{dr}{dt} + h_2$ has the same sign as $h_2$. Thus it is also strictly positive. 
\end{proof}

\begin{remark}
Note that $g_2$ and $h_2$ are really functions of $t$ where $t$ is related to $r$ by $t = (1-p)\log(u_s(r))$. They also depend on $p$ but we have chosen to suppress this in the notation. 
\end{remark}

\begin{definition} Define $g_*(t) = g_2(t)$ and $h_*(t) = h_2(t)$ and $f_*(t) = -\int_t^\infty h_*(s)\, ds$. Note that $f_*(t) \to 0$, $g_*(t)\to 0$, and $h_*(t)\to 0$ as $t\to \infty$.
\end{definition}

\begin{definition} 
For a given asymptotically flat manifold $(M,g)$, let 
\[
Q_*(t) = 4\pi(3-p)^2 f_*(t) + g_*(t)W(t) + (p-1)(3-p)h_*(t) \frac{dW}{dt}(t). 
\]
\end{definition}

% \begin{proposition} Let $(M^3,g)$ be an asymptotically flat manifold with non-negative scalar curvature and minimal boundary $\Sigma$. Fix a number $1 < p < 2$. Assume that $\Sigma$ is a 2-sphere and that the solution $u$ to (\ref{pde}) has no critical points. Define the quantity 
% \[
% X_*(t) = 4\pi(3-p)^2 f_*(t) + g_*(t) W(t) + (p-1)(3-p)h_*(t)\frac{dW}{dt}(t). 
% \]
% Then $X_*$ is an increasing function and $X_*(t) \to 0$ as $t\to \infty$. Moreover, if $X_*$ is constant then $(M,g)$ is isometric to Schwarzschild with horizon boundary. 
% \end{proposition}
% \begin{proof}
% This follows from the preceding discussion. 
% \end{proof}

\begin{proposition}
\label{proposition:decaying-value-0}
At time $t=0$, one has $f_*(0) < 0$, and $g_*(0) + 2(3-p)h_*(0) > 0$, and 
\[
\frac{-4\pi(3-p)^2 f_*(0)}{g_*(0)+2(3-p)h_*(0)} = W_s(0).
\] 
\end{proposition}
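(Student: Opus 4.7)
The plan is to evaluate the identity $Q_s \equiv 0$ (established in the preceding discussion of the decaying solution) at the boundary $t = 0$ and read off the three claims.

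First I would handle the sign of $f_*(0)$. By definition $f_*(t) = -\int_t^\infty h_*(s)\,ds$, and the previous proposition shows that $h_* = h_2 > 0$ on all of $[1,\infty)$, equivalently on all of $[0,\infty)$ in the $t$-variable. Hence $f_*(0) = -\int_0^\infty h_*(s)\,ds < 0$.

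Next I would compute $\frac{dW_s}{dt}(0)$ using equation (\ref{equation:dwdt}): on any asymptotically flat manifold one has
\[
\frac{dW}{dt} = \frac{2}{p-1}W - \frac{3-p}{p-1}\int_{\Sigma_t} H\vert \grad w\vert\, da.
\]
Applied on mass $2$ Schwarzschild at $t = 0$, where the boundary $\Sigma_s$ is minimal so $H \equiv 0$, this collapses to $\frac{dW_s}{dt}(0) = \frac{2}{p-1}W_s(0)$.

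Now I would invoke the fact, proved in the course of the previous proposition, that the quantity
\[
Q_s(t) = 4\pi(3-p)^2 f_*(t) + g_*(t)W_s(t) + (p-1)(3-p)h_*(t)\frac{dW_s}{dt}(t)
\]
is identically zero, since it is constant by Proposition \ref{proposition:monotone-ODE} and tends to $0$ as $t\to\infty$ by the decay of $f_*$, $g_*$, $h_*$ and the boundedness of $W_s$ and $dW_s/dt$. Setting $t = 0$ and substituting $\frac{dW_s}{dt}(0) = \frac{2}{p-1}W_s(0)$ gives
\[
0 = 4\pi(3-p)^2 f_*(0) + \bigl(g_*(0) + 2(3-p)h_*(0)\bigr)W_s(0).
\]
Since $W_s(0) > 0$ and $f_*(0) < 0$, the coefficient $g_*(0) + 2(3-p)h_*(0)$ must be strictly positive, and rearranging yields the claimed formula
\[
\frac{-4\pi(3-p)^2 f_*(0)}{g_*(0)+2(3-p)h_*(0)} = W_s(0).
\]
There is no real obstacle here: the argument is essentially a one-line evaluation of $Q_s \equiv 0$ at the horizon, with the sole geometric input being the vanishing of $H$ on $\Sigma_s$. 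The only point requiring a bit of care is confirming that $Q_s$ genuinely vanishes identically (as opposed to merely being constant), but this was already established in the proof that $h_2 > 0$.
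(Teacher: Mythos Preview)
Your proposal is correct and follows essentially the same approach as the paper: both argue that $f_*(0)<0$ directly from $h_*>0$, use minimality of the horizon to get $\frac{dW_s}{dt}(0)=\frac{2}{p-1}W_s(0)$, evaluate the identity $Q_s\equiv 0$ at $t=0$, and read off the remaining two claims from the resulting linear relation. The only cosmetic difference is that you spell out slightly more explicitly why $Q_s$ vanishes identically and why $W_s(0)>0$.
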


\begin{proof}
Consider the model case of mass 2 Schwarzschild. Then the function $Q_*$ is constant and $Q_*(t) \to 0$ as $t\to \infty$ and so $Q_*(0) = 0$.  It is clear from the definition of $f_*$ and the fact that $h_* > 0$ that 
\[
f_*(0) < 0. 
\]
Since $\Sigma$ is a minimal surface, equation (\ref{equation:dwdt}) implies that $\frac{dW_s}{dt}(0) = \frac{2}{p-1}W_s(0)$. Therefore 
\[
0 = Q_*(0) = 4\pi(3-p)^2 f_*(0) + \left(g_*(0) + 2(3-p)h_*(0)\right)W_s(0).
\]
It follows that $g_*(0)+2(3-p)h_*(0)>0$ and that 
\[
\frac{-4\pi(3-p)^2f_*(0)}{g_*(0)+2(3-p)h_*(0)} = W_s(0),
\]
as needed. 
\end{proof}

We can now state the following preliminary version of Theorem \ref{theorem:decaying}. 

\begin{proposition}
Let $(M^3,g)$ be a complete, asymptotically flat manifold with non-negative scalar curvature and non-empty, minimal boundary $\Sigma$. Assume in addition that (\ref{no-critical-assumption}) holds. Then $Q_*$ is non-decreasing, and $Q_*$ is constant if and only if $(M^3,g)$ is isometric to Schwarzschild with horizon boundary. Finally, one has $f_*(0) < 0$ and $g_*(0) + 2(3-p)h_*(0) > 0$. 
\end{proposition}

\begin{proof}
The monotonicity is a consequence of the fact that $f_*$, $g_*$, and $h_*$ satisfy the assumptions of Proposition \ref{proposition:monotone-ODE}.  If $Q_*$ is constant, then it follows from the proof of Proposition \ref{proposition:monotone-ODE} that equality holds in (\ref{W-ODE}) for all $t\ge 0$. The rigidity then follows from the rigidity in Proposition \ref{proposition:W-monotonicity}. The information about the initial conditions is contained in Proposition \ref{proposition:decaying-value-0}.
\end{proof}

% \begin{corollary}
% Let $(M^3,g)$ be an asymptotically flat manifold with non-negative scalar curvature and minimal boundary $\Sigma$. Fix a number $1 < p < 2$. Assume that $\Sigma$ is a 2-sphere and that the solution $u$ to (\ref{pde}) has no critical points.  Then $W(0) \le W_s(0)$. If equality holds, then $(M,g)$ is isometric to Schwarzschild with horizon boundary. 
% \end{corollary}

% \begin{proof}
% Since $X_*$ is increasing, we have $X_*(0) \le \lim_{t\to \infty} X_*(t) = 0$.  Since $\Sigma$ is minimal, we have $\frac{dW}{dt}(0) = \frac{2}{p-1}W(0)$. Now observe that 
% \[
% X_*(0) = 4\pi(3-p)^2 f_*(0) + \big(g_*(0) + 2(3-p)h_*(0)\big)W(0). 
% \]
% Since $X_*(0) \le 0$ and $f_*(0)<0$ and $g_*(0) + 2(3-p)h_*(0) > 0$, this implies that 
% \[
% W(0) \le \frac{-4\pi(3-p)^2f_*(0)}{g_*(0)+2(3-p)h_*(0)} = W_s(0).
% \]
% This completes the proof. 
% \end{proof}

\subsection{The Growing Solution} Next we turn attention to the growing solution $g_1$. For applications, we need to compute the next coefficient $a_1$ in the asymptotic expansion for $g_1$.  Substituting the series for $g_1$ into the equation (\ref{g-ODE}) and using the expansions for $a,b,c$ we find that 
\[
a_1 = \left(\frac{p-1}{3-p}\right)\left(\left[(3-p) - \frac{2(p-2)(3-p)}{p-1} - (5-p)\right] -\frac{2(3-p)^2}{p-1}\right)= -\frac{4}{p-1}. 
\]
Thus we have 
\[
g_1(r) = -r - \frac{4}{3-p} + O(r\inv). 
\]
Define 
\[
h_1(r) = \frac{1}{a}\frac{dg_1}{dr} = \frac{r}{3-p} + 1 + O(r\inv)
\]
so that $g_1$ and $h_1$ solve the system of ODEs (\ref{gh-ODE-r}). 

 In the following, we fix once and for all a small number $\eps >0$. This choice of $\eps$ is not canonical, and in principle one obtains monotone quantities for each different choice of $\eps$.  However, the precise value of $\eps$ turns out to be unimportant for our main application to the sharp $p$-Penrose inequality, and so we have not investigated this dependence further.

\begin{proposition}
%Fix a small number $\eps >0$. 
Let $g(r),h(r)$ be the unique solution to the system of ODEs (\ref{gh-ODE-r}) with initial condition $g(1)=-1$ and $h(1)=\eps$. 
Then $h(r) > 0$ and $g(r) < 0$ for all $r \in [1,\infty)$. 
\end{proposition}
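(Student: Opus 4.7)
The plan is to argue by contradiction using a first-exit-time analysis in the open region $\{g<0,\,h>0\}$. Define $\rho = \inf\{r > 1 : g(r) = 0 \text{ or } h(r) = 0\}$. By continuity and the initial conditions we have $g < 0$ and $h > 0$ on $[1,\rho)$, so it suffices to show $\rho = \infty$. I will suppose for contradiction that $\rho < \infty$ and rule out the two possible ways of exiting.

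First I would rule out $h(\rho) = 0$. From the explicit form one has $b(r) = -[(p-1)(3-p)\,dr/dt]^{-1} < 0$ on $[1,\infty)$ (using $1 < p < 2$ and $dr/dt > 0$ on Schwarzschild). Since $h > 0$ on $[1,\rho)$ with $h(\rho) = 0$, one must have $\frac{dh}{dr}(\rho) \le 0$. But the second ODE gives $\frac{dh}{dr}(\rho) = b(\rho)g(\rho)$. If $g(\rho) < 0$ this is strictly positive, a contradiction. If $g(\rho) = 0$ simultaneously, uniqueness for the linear system would force $g \equiv h \equiv 0$, contradicting $g(1) = -1$.

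It remains to rule out $g(\rho) = 0$ with $h(\rho) > 0$. For this I would use the conserved quantity on Schwarzschild from Proposition \ref{proposition:monotone-ODE}. Define $f$ by $\frac{df}{dt} = h$ and $f(0) = 0$, where $t = (1-p)\log u_s(r)$ so that $r = 1$ corresponds to $t = 0$. Then
\[
Q_s(t) = 4\pi(3-p)^2 f(t) + g(t)W_s(t) + (p-1)(3-p)h(t)\frac{dW_s}{dt}(t)
\]
is constant on Schwarzschild. From (\ref{equation:dwdt}) with $H = 0$ on the horizon one has $\frac{dW_s}{dt}(0) = \frac{2}{p-1}W_s(0)$, hence
\[
Q_s(0) = W_s(0)\bigl(2(3-p)\eps - 1\bigr),
\]
which is strictly negative provided $\eps < \frac{1}{2(3-p)}$; this is the quantitative meaning of ``small $\eps$''. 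On the other hand, letting $T$ be the value of $t$ corresponding to $r = \rho$, the strict positivity of $h$ on $(0, T]$ gives $f(T) > 0$, the relation $g(T) = 0$ kills the middle term, and $(p-1)(3-p)h(T)\frac{dW_s}{dt}(T) > 0$ using that $\frac{dW_s}{dt} > 0$ throughout Schwarzschild (verifiable by direct computation from $W_s = 4\pi\,r^2(w_s'(r))^2$). Therefore $Q_s(T) > 0 > Q_s(0)$, contradicting the constancy of $Q_s$.

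The main obstacle is setting up the conservation argument so that $Q_s$ carries a strict sign at both endpoints. The minimality of $\Sigma_s$ pins the ratio $\frac{dW_s}{dt}(0)/W_s(0)$, which combined with the choice $f(0) = 0$ makes $Q_s(0)$ an explicit linear function of $\eps$ with strictly negative leading term. Meanwhile at any hypothetical first zero of $g$, all three terms of $Q_s$ are manifestly non-negative with the $f$-term strictly positive, closing the contradiction. The remaining technical point, namely the positivity of $\frac{dW_s}{dt}$ on Schwarzschild, is routine.
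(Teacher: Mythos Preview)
Your proposal is correct and follows essentially the same strategy as the paper: a first-exit-time argument combined with the conserved quantity $Q_s$ on Schwarzschild, showing $Q_s$ is negative at $r=1$ (by minimality of the horizon and smallness of $\eps$) but would be positive at a hypothetical first zero of $g$. Your organization differs only cosmetically---you treat the two exit modes $h(\rho)=0$ and $g(\rho)=0$ in a single first-exit time, and dispatch the $h(\rho)=0$ case via the matrix form $dh/dr = bg + ch$ rather than the original system, but the content is the same; you are also slightly more explicit than the paper in flagging that the sign of the $h$-term in $Q_s(\rho)$ requires $\frac{dW_s}{dt}>0$ on Schwarzschild.
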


\begin{proof}
First we show that if $g(r) < 0$ on $[1,R]$ then $h(r) > 0$ on $[1,R]$. Suppose this is not the case. Then we can let $\rho = \inf\{r\in[1,R]:\, h(r) \le 0\}$. Note that $\rho > 1$. We must have 
\[
h(\rho) = 0\quad \text{and}\quad \frac{dh}{dr}(\rho)\le 0. 
\] 
According to the ODE this implies that 
\[
\frac{dg}{dr}(\rho) = 0 \quad \text{and}\quad g(\rho) + (p-1)(3-p)\frac{dh}{dr}(\rho) \frac{dr}{dt}(\rho) = 0. 
\]
It follows that $g(\rho) \ge 0$, a contradiction. This proves the claim. 

Hence to prove the proposition, it suffices to show that $g(r) < 0$ for all $r\in [1,\infty)$. Suppose for contradiction that this is not the case. Then we can let 
\[
\rho=\inf\{r\in [1,\infty):\, g(r) \ge 0\}.
\]
Note that $\rho > 1$. According to the previous claim, we have $h(r) \ge 0$ for all $r\in[1,\rho]$.   
Define 
\[
f(r) =  \int_1^r \frac{h(s)}{\frac{dr}{dt}(s)}\, ds
\]
and note that $f(1) = 0$ and $f(\rho) > 0$. Since $g$ and $h$ solve the ODE and $f$ is an antiderivative for $h$ with respect to $t$, the quantity 
\[
Q_s(r) = 4\pi(3-p)^2 f(r) + g(r) W_s(r) + (p-1)(3-p)h(r) \frac{dW_s}{dr}(r) \frac{dr}{dt}(r)
\]
is conserved. We have 
\[
Q_s(1) = -W_s(1) + \eps (p-1)(3-p) \frac{dW_s}{dr}(1)\frac{dr}{dt}(1),
\]
and therefore $Q_s(1)$ is negative, provided $\eps$ is chosen sufficiently small. On the other hand, we have 
\begin{align*}
Q_s(\rho) &= 4\pi(3-p)^2 f(\rho) + (p-1)(3-p)h(\rho) \frac{dW_s}{dr}(\rho)\frac{dr}{dt}(\rho) > 0,
\end{align*}
and this is a contradiction. Thus $g(r) < 0$ for all $r\in [1,\infty)$, as needed. 
\end{proof} 

\begin{proposition} 
\label{proposition:growing-value-0}
There exist solutions $g(r)$ and $h(r)$ to the ODE (\ref{gh-ODE-r}) such that 
\[
g(1) + 2(3-p)h(1) < 0,
\]
and $h > 0$ and $\frac{dg}{dr}\frac{dr}{dt} + h > 0$ for all $r\in [1,\infty)$, and moreover 
\begin{gather*}
g(r) = -r -\frac{4}{3-p} + O(r\inv)\,\\
h(r) = \frac{r}{3-p}+1+O(r\inv),
\end{gather*} 
as $r\to \infty$. 
\end{proposition}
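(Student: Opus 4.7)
The plan is to construct $(g,h)$ as a positive rescaling of the solution furnished by the previous proposition. Fix a small $\eps > 0$ and let $(\tilde g, \tilde h)$ denote the solution of (\ref{gh-ODE-r}) with initial data $\tilde g(1) = -1$ and $\tilde h(1) = \eps$, so by the previous proposition $\tilde g < 0$ and $\tilde h > 0$ on $[1,\infty)$. Because (\ref{gh-ODE-r}) is a linear system with the two fundamental solutions identified by Frobenius, we may decompose uniquely
\[
(\tilde g, \tilde h) = c_1(g_1, h_1) + c_2(g_2, h_2).
\]
The goal is to rescale by $1/c_1$ so that the resulting pair inherits the leading growing asymptotics of $(g_1,h_1)$ exactly, and this hinges on showing $c_1 > 0$.

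First, $c_1 \geq 0$: since $g_1(r) = -r + O(1)$ and $g_2(r) \to 0$ as $r\to\infty$, one has $\tilde g(r) = -c_1 r + O(1)$, so $c_1 < 0$ would force $\tilde g \to +\infty$, contradicting $\tilde g < 0$ on $[1,\infty)$. Next, $c_1 \ne 0$: solving the linear system at $r = 1$ by Cramer's rule gives
\[
c_1 = -\frac{h_2(1) + g_2(1)\eps}{\det M},
\]
where $M$ has columns $(g_1(1), h_1(1))^T$ and $(g_2(1), h_2(1))^T$. Since $h_2(1) > 0$, this expression is nonzero at $\eps = 0$, hence nonzero for all sufficiently small $\eps > 0$ by continuity. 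Combining the two, $c_1 > 0$.

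Now define $(g, h) = (1/c_1)(\tilde g, \tilde h) = (g_1 + \mu g_2, h_1 + \mu h_2)$ where $\mu = c_2/c_1$. For $1 < p < 2$ one has $(3-p)/(p-1) > 1$, so the decaying modes satisfy $g_2, h_2 = O(r^{-(3-p)/(p-1)}) \subset O(r\inv)$; hence $(g, h)$ matches the asymptotic expansions of $(g_1, h_1)$ up to $O(r\inv)$, producing the stated expansions. Positivity $h > 0$ is immediate from $h = \tilde h/c_1$. For the remaining positivity, the second equation of (\ref{gh-ODE-r}) rearranges to
\[
\frac{dg}{dr}\frac{dr}{dt} + h = \frac{(p-1)(5-p)}{4W_s^2}\left(\frac{dW_s}{dr}\frac{dr}{dt}\right)^2 h > 0,
\]
where strict positivity uses $\frac{dW_s}{dr} \neq 0$ on $[1,\infty)$, which is verified by direct computation on the explicit Schwarzschild metric. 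Finally $g(1) + 2(3-p)h(1) = (-1 + 2(3-p)\eps)/c_1 < 0$ as soon as $\eps < 1/(2(3-p))$, which is compatible with the earlier smallness condition.

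The main obstacle is the claim $c_1 > 0$: although the sign condition $\tilde g < 0$ gives $c_1 \geq 0$ almost for free, excluding $c_1 = 0$ requires inspecting the nondegeneracy of the initial-data-to-coefficient map, which in turn uses the fact that $h_2(1) > 0$ established in the decaying analysis. Once $c_1 > 0$ is in place, the rescaling transfers positivity and the boundary inequality directly, while the asymptotic expansion is automatic because the decaying correction $\mu g_2$ is strictly of order smaller than $r\inv$.
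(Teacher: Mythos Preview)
Your argument is correct and follows essentially the same route as the paper: take the solution $(\tilde g,\tilde h)$ from the preceding proposition, decompose it in the Frobenius basis, argue $c_1>0$, and rescale by $1/c_1$. Your treatment of $c_1\neq 0$ via Cramer's rule and $h_2(1)>0$ is slightly more explicit than the paper's ``perturb $\eps$ if necessary,'' and your derivation of $\frac{dg}{dr}\frac{dr}{dt}+h>0$ directly from the second equation of (\ref{gh-ODE-r}) is equivalent to the paper's observation that this quantity equals $(a\frac{dr}{dt}+1)h$; both reduce the strict positivity to $\frac{dW_s}{dr}\neq 0$ on $[1,\infty)$, which neither proof fully justifies but which is inessential for the downstream applications (Proposition~\ref{proposition:monotone-ODE} only needs nonnegativity).
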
 

\begin{proof}
Let $\tilde g(r)$ and $\tilde h(r)$ be the solution to the ODE (\ref{gh-ODE-r}) with initial conditions $\tilde g(1) = -1$ and $\tilde h(1) = \eps$. Then we know that 
\[
\tilde g = c_1 g_1 + c_2 g_2, \quad \tilde h = c_1 h_1 + c_2h_2
\]
for some constants $c_1$ and $c_2$. By perturbing $\eps$ slightly if necessary, we can ensure that $c_1 \neq 0$. According to the previous proposition, we have $\tilde g < 0$ and $\tilde h > 0$. Note that $\frac{d\tilde g}{dr}\frac{dr}{dt}+\tilde h$ always has the same sign as $\tilde h$. By the asymptotic expansions of $g_1,g_2,h_1,h_2$, it must be that $c_1 > 0$ and 
\begin{gather*}
\tilde g = c_1\left(-r - \frac{4}{3-p} + O(r\inv)\right),\\
\tilde h = c_1\left(\frac{r}{3-p} + 1 + O(r\inv)\right),\\
\end{gather*}
as $r\to \infty$. The result now follows with $g(r) = \tilde g(r)/c_1$ and $h(r) = \tilde h(r)/c_1$. 
\end{proof}

\begin{remark}
As before, the functions $g$ and $h$ are better thought of as functions of $t$ where $t = (1-p)\log u_s(r)$. The functions $g$ and $h$ also depend on $p$, but we have not indicated this in the notation.
\end{remark}

\begin{definition}
\label{def:f}
Define $g^*(t) = g(t)$ and $h^*(t) = h(t)$ where $g$ and $h$ are given by the previous proposition. Also define 
\[
f^*(t) = \int_0^t h^*(s)\, ds + q 
\]
where $q$ is chosen so that the asymptotic expansion of $f^*$ at infinity has constant order term $3-p$.
\end{definition}

\begin{definition}
For a given asymptotically flat manifold $(M,g)$, define 
\[
Q^*(t) = 4\pi(3-p)^2 f^*(t) + g^*(t)W(t) + (p-1)(3-p)h^*(t)\frac{dW}{dt}(t). 
\]
\end{definition}

\begin{remark}\label{remark: asy behavior of coefficient functions}
As $t = (1-p)\log u_s(r)$, we have 
\[
r = \tilde c_{p,s} e^{\frac{t}{3-p}} - (3-p) + O(e^{-\frac{t}{3-p}}),
\]
where $\tilde c_{p,s} = (c_{p,s})^{\frac{p-1}{3-p}}$. It follows that we have expansions 
\begin{gather*}
f^*(t) = \tilde c_{p,s} e^{\frac{t}{3-p}} + (3-p) +  O(e^{-\frac{t}{3-p}}),\\
g^*(t) = -\tilde c_{p,s} e^{\frac{t}{3-p}}-(3-p) - \frac{4}{3-p} + O(e^{-\frac{t}{3-p}}),\\
h^*(t) = \frac{\tilde c_{p,s}}{3-p} e^{\frac{t}{3-p}} + O(e^{-\frac{t}{3-p}}),
\end{gather*}
as $t\to \infty$. 
\end{remark}

We can now state the following preliminary version of Theorem \ref{theorem:growing}. 

\begin{proposition}
Let $(M^3,g)$ be a complete, asymptotically flat manifold with non-negative scalar curvature and non-empty, minimal boundary $\Sigma$. Assume in addition that (\ref{no-critical-assumption}) holds. Then $Q^*$ is non-decreasing, and $Q^*$ is constant if and only if $(M^3,g)$ is isometric to Schwarzschild with horizon boundary. Finally, one has $g^*(0) + 2(3-p)h^*(0) < 0$. 
\end{proposition}

\begin{proof}
The monotonicity is a consequence of the fact that $f^*$, $g^*$, and $h^*$ satisfy the assumptions of Proposition \ref{proposition:monotone-ODE}. The rigidity follows from the rigidity in Proposition \ref{proposition:W-monotonicity}. The information about the initial conditions is contained in Proposition \ref{proposition:growing-value-0}.
\end{proof}

\section{Monotonicity in the Presence of Critical Points}
\label{section:Monotonicity}

Throughout this section, we assume that $(M^3,g)$ is a complete, asymptotically flat manifold with non-negative scalar curvature and non-empty, minimal boundary $\Sigma$. We further assume that $H_2(M,\Sigma) = 0$ and let $u$ be the $p$-harmonic function satisfying (\ref{u-pde}). Define $w$ and $W$ as before and let $\Sigma_t = \{w=t\}$. We assume that $f,g,h\f [0,\infty)\to \R$ solve the ODE system (\ref{fgh-ODE-t}) and that $h$ and $\frac{dg}{dt}+h$ are positive. For each regular value $t$ of $w$, define 
\begin{align*}
Q(t) &= 4\pi(3-p)^2 f(t) + g(t)W(t) + (p-1)(3-p)h(t)\frac{dW}{dt}(t)\\
&= 4\pi(3-p)^2 f(t) + \big(g(t) + 2(3-p)h(t)\big)\int_{\Sigma_t} \vert \grad w\vert^2 \, d\sigma_t - (3-p)^2 h(t) \int_{\Sigma_t} H\vert \grad w\vert\, d\sigma_t.
\end{align*}
The goal of this section is to prove the monotonicity of $Q$, even when $u$ is allowed to have critical points. Following \cite{agostiniani2022riemannian}, we give the proof in a series of steps of increasing generality. We shall be relatively brief since similar computations have been done in \cite{agostiniani2022riemannian} and \cite{agostiniani2021green}.  

\subsection{No Critical points} $ $
First consider the simplest case when $w$ has no critical points on the entire manifold $M$.
Define the vector field  
\begin{align*}
    X=\ &4\pi (3-p)^2f(w)e^{-w}c_0^{-1}|\nabla w|^{p-2}\nabla w+(g(w)+2(3-p)h(w))|\nabla w|\nabla w\\
      &+(3-p)^2h(w)\left(-\Delta w\frac{\nabla w}{|\nabla w|}+\nabla|\nabla w|\right),
\end{align*}
where $c_0=\int_{\Sigma_0}|\nabla w|^{p-1}$. By the same methods as \cite{agostiniani2022riemannian} Section 1.2, we compute that
\begin{align*}
    \div X=\ &4\pi(3-p)^2f'(w)e^{-w}c_0^{-1}|\nabla w|^p\\
                &+\left[ g(w)+\frac{(3-p)(5+p)}{4}h(w)+g'(w)+(3-p)(p-1)h'(w)\right]|\nabla w|^3\\
                &+(3-p)\left[g(w)+\frac{(3-p)(p+1)}{2}h(w)+(3-p)(p-1)h'(w)\right]\langle\nabla|\nabla w|,\nabla w\rangle\\
                &+\frac{(p-1)(5-p)(3-p)^2}{4}h(w)\frac{|\nabla^{\perp}|\nabla w||^2}{|\nabla w|}\\
                &+\frac{(3-p)^2}{2}h(w)|\nabla w|(R_g-R_{\Sigma_t}+\|\mathring{A}\|^2)+(3-p)^2h(w)\frac{|\nabla^{\Sigma}|\nabla w||^2}{|\nabla w|}\\
            \geq\ &4\pi(3-p)^2 f'(w)e^{-w}c_0^{-1}|\nabla w|^p-(3-p)^2h(w)|\nabla w|K_{\Sigma_t}\\
                &+\left[a(w)\nabla w+b(w)\frac{\nabla^{\perp}|\nabla w|}{|\nabla w|}\right]^2|\nabla w|,
\end{align*}
where 
\begin{align*}
    a(w) &= \sqrt{g'(w)+h(w)}-\sqrt{\frac{(p-1)(5-p)}{4}h(w)},\\
    b(w) &= - (3-p) \sqrt{\frac{(p-1)(5-p)}{4}h(w)}.
\end{align*}
% \begin{align*}
%     a(w)^2&=g(w)+\frac{(3-p)(5+p)}{4}h(w)+g'(w)+(3-p)(p-1) h'(w),\\ b(w)^2&=\frac{(p-1)(5-p)(3-p)^2}{4}h(w).
% \end{align*}
To make the perfect square, we used the ODEs which are satisfied by $f$, $g$, and $h$ and in particular the relation (\ref{perfect-square}). 

Now observe that 
\begin{align*}
    Q(t)=\int_{\Sigma_t}\langle X,\frac{\nabla w}{|\nabla  w|}\rangle \,d\sigma_t.
\end{align*}
Using the Gauss-Bonnet formula and the fact that level sets of $w$ are connected, for any $t_2>t_1$, we have
\begin{align*}
    Q(t_2)-Q(t_1)&=\int_{\Sigma_{t_2}}\langle X,\frac{\nabla w}{|\nabla w|}\rangle \, d\sigma_{t_2}-\int_{\Sigma_{t_1}}\langle X,\frac{\nabla w}{|\nabla w|}\rangle \, d\sigma_{t_1}\\
                     &=\int_{\{t_1\leq w\leq t_2\}}\div X\, dV\\
                     &=\int_{t_1}^{t_2}\int_{\Sigma_t}\frac{\div X}{|\nabla w|}d\sigma_t \geq 0.
\end{align*}
This proves the monotonicity of $Q$. 

\subsection{Monotonicity with negligible critical values}\label{subsection:case2}
Next we aim to prove the monotonicity of $Q$ in the case where the set of critical values of $w$ has measure 0.
Consider the vector field $X$ from the previous subsection, and write 
\begin{align*}
    X=4\pi (3-p)^2f(w)e^{-w}c_0^{-1}|\nabla w|^{p-2}\nabla w+Y.
\end{align*}
Note that $Y$ is only well-defined away from the critical points of $w$. Consider a sequence of cut-off functions $\eta_k:[0,\infty)\to [0,1]$ satisfying
\[
	\eta_k(\tau)\equiv 0 \quad \forall \tau\in [0,\frac{1}{2k}], \quad 0\leq \eta_k'(\tau)\leq 2k\quad \forall \tau\in [\frac{1}{2k},\frac{3}{2k}],\quad \eta_k(\tau)\equiv 1\quad \forall \tau\in [\frac{3}{2k},\infty).
\]
 Using the cut-off functions, we introduce the vector fields
 \[
	Y_k=\eta_k(\alpha(w)|\nabla w|)Y,
 \]
 where $\alpha$ is a function satisfying the ODE
 \begin{align}\label{ODE}
    \alpha(g+(3-p)(p-1)h)=(3-p)^2(p-1)\alpha' h.
 \end{align}
More precisely, we can solve this ODE in terms of $g,h$ as
 \begin{align*}
    \alpha(w)=\exp\left({\int_0^w\frac{g+(3-p)(p-1)h}{(3-p)^2(p-1)h}dt}\right).
 \end{align*}
Then we define 
\begin{align*}
    X_k=4\pi (3-p)^2f(w)e^{-w}c_0^{-1}|\nabla w|^{p-2}\nabla w+Y_k.
\end{align*}
By direct computation, we obtain
\begin{align*}
    \div X_k\geq 4\pi(3-p)^2 f'(w)e^{-w}c_0^{-1}|\nabla w|^p+\eta_k(\alpha(w)|\nabla w|)\div Y.
\end{align*}
Now, for given regular values $0\leq s\leq\tau\leq \infty$, we can find  $k\in\mathbb{N}$ large enough that
\begin{align*}
    Q(\tau)-Q(s)&=\int_{\Sigma_{\tau}}\langle X_k,\frac{\nabla w}{|\nabla w|}\rangle \, d\sigma_{\tau}-\int_{\Sigma_s}\langle X_k,\frac{\nabla w}{|\nabla w|}\rangle \, d\sigma_{s}\\
                    &=\int_{M_{s,\tau}}\div X_k\, dV\\
                    &\geq\int_{M_{s,\tau}}4\pi(3-p)^2 f'(w)e^{-w}c_0^{-1}|\nabla w|^p+\eta_k(\alpha(w)|\nabla w|)\div Y\, dV.
\end{align*}
Note that, away from the critical points of $w$, we have
\begin{align*}
    \div Y&=\left[ g(w)+\frac{(3-p)(5+p)}{4}h(w)+g'(w)+(3-p)(p-1)h'(w)\right]|\nabla w|^3\\
    &+\frac{3-p}{p-1}\left[g(w)+\frac{(3-p)(p+1)}{2}h(w)+(3-p)(p-1)h'(w)\right](|\nabla w|^2-|\nabla w|H)\\
    &+\frac{(p-1)(5-p)(3-p)^2}{4}h(w)\frac{|\nabla^{\perp}|\nabla w||^2}{|\nabla w|}\\
    &+\frac{(3-p)^2}{2}h(w)|\nabla w|(R_g-R_{\Sigma}+\|\mathring{A}\|^2)+(3-p)^2h(w)\frac{|\nabla^{\Sigma}|\nabla w||^2}{|\nabla w|}.
\end{align*}
This can be divided into two parts: a non-negative part and a part which is bounded on any compact subset of $M$.
Applying the monotone convergence theorem and dominated convergence theorem, and passing $k\to\infty$, we obtain that
\begin{align*}
    Q^*(\tau)-Q^*(s)&\geq \int_{M_{s,\tau}}4\pi(3-p)^2 f'(w)e^{-w}c_0^{-1}|\nabla w|^p+\int_{M_{s,\tau}}\chi_{M\setminus \text{Crit}(w)}\div Y.
\end{align*}
The assumption that $H_2(M,\Sigma)=0$ implies that all the regular level sets of $u$ are connected. Combining this with the co-area formula and the Gauss-Bonnet theorem, we get that the right hand side of the previous inequality is non-negative. This is the desired monotonicity.

\subsection{An approximate monotonicity formula}
Finally we consider the general case, with no assumption on the critical values of $w$.  Since the best possible regularity for $p$-harmonic functions, when $p\neq 2$,  is $C^{1,\alpha}$, one cannot directly apply Sard's theorem and claim that the critical values of $w$ are negligible. To prove the monotonicity formula without assumption on the critical values of $w$, we use an $\epsilon$-approximation to prove an approximate monotonicity formula and obtain the desired result after passing $\epsilon\to 0$. 

In this subsection, we use $w_p$ to denote $(1-p)\log u$. Recall that $w_p$ solves the equation $\lap_p w_p = \vert \grad w_p\vert^p$. 
Consider the locally perturbed version of this equation:
\begin{equation}\label{espilonveq}
    \begin{cases}
    \text{div}((\sqrt{|\nabla w|^2+\epsilon^2})^{p-2}\nabla w)=(\sqrt{|\nabla w|^2+\epsilon^2})^{p-2}|\nabla w|^2	&   \text{ in } M_T=\{0\leq w_p\leq T\},\\
    w=0 & \text{ on } \Sigma,\\
    w=T & \text{ on } \{w_p=T\}
    \end{cases}
    \end{equation}
where $T\in (0,\infty)$ is a fixed regular value of $w_p$.

Similarly to $w_p$, the locally perturbed function $w$ satisfies
\begin{align}
    \int_{\Sigma_t^{\epsilon}}(\sqrt{|\nabla w|^2+\epsilon^2})^{p-2}|\nabla w|=c_{\epsilon,0}e^t,
\end{align}
where $c_{\epsilon,0}=\int_{\Sigma_0^{\epsilon}}(\sqrt{|\nabla w|^2+\epsilon^2})^{p-2}|\nabla w|\, d\sigma_t^{\epsilon}$, and we use $\Sigma_t^{\epsilon}$ to denote the level sets of \eqref{espilonveq}.

We then consider the vector field
\begin{align*}
    X_{\epsilon}=4\pi (3-p)^2f(w)e^{-w}c_{\epsilon,0}^{-1}(\sqrt{|\nabla w|^2+\epsilon^2})^{p-2}\nabla w+Y_{\epsilon},
\end{align*}
where
\begin{align*}
    Y_{\epsilon}=(g(w)+2(3-p)h(w))|\nabla w|\nabla w+(3-p)^2h(w)\left(-\Delta w\frac{\nabla w}{|\nabla w|}+\nabla|\nabla w|\right).
\end{align*}
As in previous subsections, we compute
\begin{align*}
    \div X_{\epsilon}=4\pi (3-p)^2 f'(w)e^{-w}c_{\epsilon,0}^{-1}(\sqrt{|\nabla w|^2+\epsilon^2})^{p-2}|\nabla w|^2+\div Y_{\epsilon},
\end{align*}
and
\begin{align*}
    \div Y_{\epsilon}=&|\nabla w|\left[a(w)\nabla w+b(w)\frac{|\nabla w|}{|\nabla w|^2+\epsilon^2}\nabla^{\perp}|\nabla w|\right]^2-\frac{\epsilon^2(3-p)^2h|\nabla w|^3}{2(p+1)|\nabla w|^2+3\epsilon^2}\\
                        +&\frac{\epsilon^2(3-p)^2h}{|\nabla w|^2+\epsilon^2}\left[\sqrt{\frac{2(p+1)|\nabla w|^2+3\epsilon^2}{4(|\nabla w|^2+\epsilon^2)|\nabla w|}}\nabla^{\perp}|\nabla w|-\sqrt{\frac{(|\nabla w|^2+\epsilon^2)|\nabla w|}{2(p+1)|\nabla w|^2+3\epsilon^2}}\nabla w\right]^2\\
                        +& \frac{(3-p)^2}{2}|\nabla w|h(R_g-R_{\Sigma}+\|\mathring{A}\|^2)+(3-p)^2h\frac{|\nabla^{\Sigma}|\nabla w||^2}{|\nabla w|}.
\end{align*}
Using the same cut-off functions as the previous subsection, we define
\begin{align*}
    Y_{k,\epsilon}&=\eta_k(\alpha(w)|\nabla w|)Y_{\epsilon},\\
    X_{k,\epsilon}&=4\pi (3-p)^2f(w)e^{-w}c_{\epsilon,0}^{-1}(\sqrt{|\nabla w|^2+\epsilon^2})^{p-2}\nabla w+Y_{k,\epsilon}.
\end{align*}
Note that, when $\eta_k'\neq 0$, we have
\begin{align*}
    \frac{\langle \nabla \eta_k, Y_{\epsilon}\rangle}{\eta_k'}&=\alpha'(g+2(3-p)h)|\nabla w|^4+\alpha'(3-p)^2h(-\Delta w|\nabla w|^2+|\nabla w|\langle \nabla|\nabla w|,\nabla w\rangle)\\
                                              &+(\alpha)(g+2(3-p)h)|\nabla w|\langle\nabla|\nabla w|,\nabla w\rangle\\
                                              &+(\alpha)(3-p)^2h(-\Delta w\langle \nabla |\nabla w|,\frac{\nabla w}{|\nabla w|}\rangle +|\nabla|\nabla w||^2)\\
                                              &\geq (2-p)(3-p)^2\epsilon^2\frac{\langle \nabla|\nabla w|,\nabla w\rangle}{|\nabla w|(|\nabla w|^2+\epsilon^2)}\\
                                              &\geq -|2-p|(3-p)^2h\max_{M_T}|\nabla^2 w|.
\end{align*}
Therefore, we obtain that
\begin{align*}
    \div Y_{k,\epsilon}\geq \eta_k(\alpha(w)|\nabla w|)\text{div}Y_{\epsilon}-\frac{5}{k}|2-p|(3-p)^2h\max_{M_T}|\nabla^2 w|.
\end{align*}
For regular values $t$ of $w$, define
\begin{align}
    Q_{k,\epsilon}(t):=\int_{\Sigma_t^{\epsilon}}\langle X_{k,\epsilon},\frac{\nabla w}{|\nabla w|}\rangle.
\end{align}
Now, for any regular values $0\leq s\leq\tau< \infty$, and large enough $k$, we have
\begin{align*}
    Q_{k,\epsilon}(\tau)-X_p^{k,\epsilon}(s)=&\int_{\Sigma_{\tau}^{\epsilon}}\langle X_{k,\epsilon},\frac{\nabla w}{|\nabla w|}\rangle \, d\sigma-\int_{\Sigma_s^{\epsilon}}\langle X_{k,\epsilon},\frac{\nabla w}{|\nabla w|}\rangle \, d\sigma\\
                                              =&\int_{\{s\leq w\leq \tau\}}\div X_{k,\epsilon}\\
                                           \geq& \int_{\{s\leq w\leq \tau\}}4\pi (3-p)^2 f'(w)e^{-w}c_{\epsilon,0}^{-1}(\sqrt{|\nabla w|^2+\epsilon^2})^{p-2}|\nabla w|^2\\
                                               &+ \int_{\{s\leq w\leq \tau\}}\eta_k(\alpha(w)|\nabla w|)\div Y_{\epsilon}-\frac{5}{k}|2-p|(3-p)^2h\max_{M_T}|\nabla^2 w|.
\end{align*}

By the same argument as in the previous subsection \ref{subsection:case2}, we pass $k\to\infty$, and obtain
\begin{align*}
    Q_{\epsilon}(\tau)-Q_{\epsilon}(s)\geq& \int_{\{s\leq w\leq \tau\}\setminus\text{Crit}(w)}\int_{\Sigma_t^{\epsilon}}\frac{\div X_{\epsilon}}{|\nabla w|}\\
                                          \geq&-\epsilon^2(3-p)^2\int_{\{s\leq w\leq\tau\}\setminus\text{Crit}(w)}\frac{h|\nabla w|^3}{2(p+1)|\nabla w|^2+3\epsilon^2}\\
                                          \geq&-\frac{\epsilon(3-p)^2}{6}\int_{\{s\leq w\leq\tau\}\setminus\text{Crit}(w)}|\nabla w|^2,
\end{align*}
where for regular values $t$, we define
\begin{align}
    Q_{\epsilon}:=\int_{\Sigma_t^{\epsilon}}\langle X_{\epsilon}, \frac{\nabla w}{|\nabla w|}\rangle.
\end{align}
Using the same argument as in \cite{agostiniani2022riemannian}, we obtain the following lemma.
\begin{lemma}
    For regular values $t$, 
    \begin{align*}
        \lim_{\epsilon\to 0}Q_{\epsilon}(t)=Q(t).
    \end{align*}
\end{lemma}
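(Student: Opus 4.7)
The plan is to prove the lemma by establishing that the regularized solutions $w_\eps$ of (\ref{espilonveq}) converge to $w$ with sufficient regularity on a neighborhood of $\Sigma_t$, and then passing to the limit in the integrand, the surface measure, and the normalizing constant $c_{\eps,0}$.

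First, I would apply standard regularity theory for the regularized $p$-Laplacian equation (\ref{espilonveq}). Since the coefficient $(\sqrt{|\grad w_\eps|^2+\eps^2})^{p-2}$ is uniformly elliptic on regions where $|\grad w_\eps|$ is bounded away from $0$, Lieberman-type estimates, together with the treatment in \cite{agostiniani2022riemannian}, yield: $w_\eps \to w$ uniformly on $M_T$; $w_\eps \to w$ in $C^{1,\alpha}_{\mathrm{loc}}$ on $M_T \setminus \text{Crit}(w)$; and, by elliptic bootstrapping on the nondegenerate region, $w_\eps \to w$ in $C^{2,\alpha}_{\mathrm{loc}}$ on $M_T \setminus \text{Crit}(w)$.

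Second, since $t$ is a regular value of $w$, compactness of $\Sigma_t$ produces an open neighborhood $U$ on which $|\grad w| \ge 2\delta > 0$, and for all sufficiently small $\eps$ the $C^{1,\alpha}$ convergence implies $|\grad w_\eps| \ge \delta$ on $U$. Thus $\Sigma_t^\eps \cap U = \{w_\eps = t\} \cap U$ is a smooth hypersurface, and the implicit function theorem presents it as a normal graph over $\Sigma_t$ whose graphing function tends to $0$ in $C^{2,\alpha}$. Consequently the induced area measure, the outward unit normal, $\lap w_\eps$, and $\grad|\grad w_\eps|$ all converge uniformly on $\Sigma_t^\eps$ to their analogs on $\Sigma_t$. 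Next I would check convergence of the integrand: the factor $(\sqrt{|\grad w_\eps|^2 + \eps^2})^{p-2}$ converges uniformly on $U$ to $|\grad w|^{p-2}$ because $|\grad w| \ge 2\delta$, and together with $C^{2,\alpha}$ convergence this gives $Y_\eps \to Y$ uniformly on $\Sigma_t^\eps$. For $c_{\eps,0}$, the boundary condition forces $\Sigma_0^\eps = \Sigma = \Sigma_0$, and Hopf's lemma gives $|\grad w| > 0$ along $\Sigma$, so the same argument yields $c_{\eps,0} \to c_0$. Pulling the integral over $\Sigma_t^\eps$ back to $\Sigma_t$ via the graph map and invoking dominated convergence then yields $Q_\eps(t) \to Q(t)$.

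The main obstacle is ensuring that $|\grad w_\eps|$ does not degenerate near $\Sigma_t$ uniformly in $\eps$: this is exactly what the $C^{1,\alpha}$ convergence on the regular set provides, but establishing that convergence uniformly on compact subsets disjoint from $\text{Crit}(w)$ is the delicate point. As in \cite{agostiniani2022riemannian}, it can be handled by combining uniform $L^\infty$ bounds on $w_\eps$, Lieberman's $C^{1,\alpha}$ estimates for degenerate elliptic equations, and the observation that critical points of $w_\eps$ cannot accumulate at regular points of $w$. Once this uniform nondegeneracy is secured, the remaining steps are routine applications of the implicit function theorem and dominated convergence.
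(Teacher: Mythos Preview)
Your proposal is correct and follows essentially the same approach the paper intends: the paper does not give an independent argument but simply invokes the method of \cite{agostiniani2022riemannian}, which is precisely the regularity-and-graph-convergence scheme you outline (uniform $C^{1,\alpha}$/$C^{2,\alpha}$ convergence of $w_\eps$ to $w_p$ away from $\text{Crit}(w_p)$, representation of $\Sigma_t^\eps$ as a $C^{2,\alpha}$-small normal graph over $\Sigma_t$, and dominated convergence for the integrand together with $c_{\eps,0}\to c_0$). One minor point worth making explicit: uniform convergence $w_\eps\to w_p$ on $M_T$ ensures that for small $\eps$ the entire level set $\{w_\eps=t\}$ lies inside your neighborhood $U$, so no stray components escape the graph description.
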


\noindent With this lemma, we obtain the monotonicity formula:

\begin{proposition}
    For regular values $0\leq s\leq\tau<\infty$, we have
    \begin{align}
        Q(\tau)\geq Q(s).
    \end{align}
\end{proposition}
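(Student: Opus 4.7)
The plan is to combine the approximate monotonicity inequality derived just above the final proposition with the limit lemma that identifies $\lim_{\epsilon \to 0} Q_\epsilon(t) = Q(t)$ on regular values. The strategy is straightforward: pass to the limit $\epsilon \to 0$ in the inequality
\[
Q_\epsilon(\tau) - Q_\epsilon(s) \geq -\frac{\epsilon(3-p)^2}{6}\int_{\{s \leq w \leq \tau\}\setminus \operatorname{Crit}(w)} |\grad w|^2\, dV,
\]
and conclude that $Q(\tau) \geq Q(s)$ for any two regular values $s \leq \tau$ of $w$.

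More precisely, I would first argue that the integral on the right-hand side is finite and bounded independently of $\epsilon$. Since $\{s \leq w \leq \tau\}$ is a compact subset of $M$ for finite $\tau$ (using that $w$ is proper, which follows from $u \to 0$ at infinity together with standard regularity for $p$-harmonic functions), and since $|\grad w|$ is locally bounded on $M$ by classical $C^{1,\alpha}$ regularity theory for $p$-harmonic functions, the integral is bounded by a constant depending only on the compact region $\{s \leq w \leq \tau\}$, and in particular independent of $\epsilon$. Hence the right-hand side tends to $0$ as $\epsilon \to 0$.

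Next, applying the lemma stated just before the proposition, which says $Q_\epsilon(t) \to Q(t)$ as $\epsilon \to 0$ at any regular value $t$ of $w$, we can pass to the limit in both $Q_\epsilon(\tau)$ and $Q_\epsilon(s)$ (since $s$ and $\tau$ were assumed to be regular values). Combining these two observations gives
\[
Q(\tau) - Q(s) = \lim_{\epsilon \to 0}\left(Q_\epsilon(\tau) - Q_\epsilon(s)\right) \geq \lim_{\epsilon \to 0} \left(-\frac{\epsilon(3-p)^2}{6}\int_{\{s \leq w \leq \tau\}\setminus \operatorname{Crit}(w)} |\grad w|^2\, dV\right) = 0,
\]
which is the claimed monotonicity.

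The main technical subtlety, which I would need to verify carefully, is that the convergence $Q_\epsilon(t) \to Q(t)$ is actually unconditional on regular values, i.e.\ that it makes sense to evaluate the limit at fixed regular values $s$ and $\tau$ of the unperturbed function $w$, noting that $\Sigma_t^\epsilon \to \Sigma_t$ in a suitable sense and that $|\grad w|$ on the perturbed problem converges appropriately to $|\grad w_p|$. This is handled by the preceding lemma, which cites the analogous convergence argument from \cite{agostiniani2022riemannian}; the remaining work is really just the routine limiting argument outlined above. No new ingredient beyond the approximate monotonicity and the convergence lemma is needed.
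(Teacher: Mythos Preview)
Your proposal is correct and matches the paper's approach exactly: the paper simply writes ``With this lemma, we obtain the monotonicity formula'' and states the proposition, leaving the limiting argument implicit, and you have filled in precisely those details. One small point of notation to be aware of: in the approximate monotonicity inequality the symbol $w$ refers to the \emph{perturbed} solution (depending on $\epsilon$), not to $w_p$, so the bound on the integral $\int_{\{s\le w\le\tau\}\setminus\mathrm{Crit}(w)}|\nabla w|^2$ uniformly in $\epsilon$ really comes from the $C^{1,\alpha}$ convergence of the perturbed solutions (as in \cite{agostiniani2022riemannian}) rather than directly from properties of $w_p$; your argument still goes through once this is noted.
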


\noindent This has the following immediate corollary. 

\begin{corollary}
For any regular values $0 \le s \le \tau < \infty$ of $w$ we have $Q_*(\tau) \ge Q_*(s)$ and $Q^*(\tau) \ge Q^*(s)$. 
\end{corollary}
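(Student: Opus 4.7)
The plan is to observe that this corollary is an immediate consequence of the preceding proposition, once we verify that both coefficient triples $(f_*,g_*,h_*)$ and $(f^*,g^*,h^*)$ satisfy all the hypotheses placed on $(f,g,h)$ in the setup of Section \ref{section:Monotonicity}. Specifically, the preceding proposition states that $Q(\tau) \ge Q(s)$ whenever $f,g,h \colon [0,\infty) \to \mathbb{R}$ solve the ODE system (\ref{fgh-ODE-t}) and satisfy the positivity conditions that $h > 0$ and $\frac{dg}{dt} + h > 0$. So all that remains is a checklist.

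First I would verify the decaying case. By construction in Section \ref{section:ODE}, the pair $(g_*, h_*) = (g_2, h_2)$ solves the last two equations of the system (\ref{fgh-ODE-t}) on mass-$2$ Schwarzschild, and $f_*$ is defined as an antiderivative of $h_*$, i.e.\ $\frac{df_*}{dt} = h_*$, so the third ODE holds as well. The positivity $h_* > 0$ on $[0,\infty)$ is exactly the content of the first proposition of the ``decaying solution'' subsection; the same proposition also shows that $\frac{dg_*}{dr}\frac{dr}{dt} + h_* > 0$ on $[1,\infty)$, which in the $t$ variable is precisely $\frac{dg_*}{dt} + h_* > 0$ on $[0,\infty)$. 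Thus the preceding proposition applies to $(f_*, g_*, h_*)$ and gives $Q_*(\tau) \ge Q_*(s)$ for any regular values $0 \le s \le \tau < \infty$.

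Next I would run the same check for the growing case. By Proposition \ref{proposition:growing-value-0}, the functions $g^*, h^*$ solve the first two equations of (\ref{fgh-ODE-t}), satisfy $h^* > 0$, and satisfy $\frac{dg^*}{dt} + h^* > 0$ on $[0,\infty)$. The function $f^*$ is defined as $\int_0^t h^*(s)\, ds + q$, which immediately gives $\frac{df^*}{dt} = h^*$, so the third ODE in (\ref{fgh-ODE-t}) is satisfied. Hence the preceding proposition again applies, yielding $Q^*(\tau) \ge Q^*(s)$ for any regular values $0 \le s \le \tau < \infty$.

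There is no real obstacle here: the corollary is essentially a bookkeeping consequence of assembling results that are already established earlier in the paper. The only thing one should be slightly careful about is that the positivity statements proved for $g_2, h_2$ and for $g, h$ in Section \ref{section:ODE} were phrased in terms of the radial coordinate $r \in [1, \infty)$, while the hypotheses of the monotonicity proposition are phrased in terms of $t \in [0, \infty)$. But since $t = (1-p)\log u_s(r)$ defines a smooth increasing bijection between $[1,\infty)$ and $[0,\infty)$ on Schwarzschild with $\frac{dr}{dt} > 0$, the sign conditions on $h$ and on $\frac{dg}{dr}\frac{dr}{dt}+h$ translate directly into the required sign conditions on $h$ and on $\frac{dg}{dt}+h$. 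With this observation recorded, the proof is just the two-line application sketched above.
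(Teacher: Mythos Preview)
Your proposal is correct and matches the paper's approach exactly: the paper states this corollary as an ``immediate'' consequence of the preceding proposition, and your write-up simply spells out the verification that both triples $(f_*,g_*,h_*)$ and $(f^*,g^*,h^*)$ satisfy the standing hypotheses of Section~\ref{section:Monotonicity} (solving \eqref{fgh-ODE-t} with $h>0$ and $\tfrac{dg}{dt}+h>0$), citing the relevant earlier propositions. Your remark about translating the sign conditions from the $r$-variable to the $t$-variable is a helpful clarification but not a departure from the paper's reasoning.
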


% Since $f_*,g_*, h_*$ all satisfy the assumptions of Proposition \ref{proposition:monotone-ODE}, if we define
% \begin{align*}
%     X_*=&4\pi (3-p)^2f_*(w)e^{-w}c_0^{-1}|\nabla w|^{p-2}\nabla w+(g_*(w)+2(3-p)h_*(w))|\nabla w|\nabla w\\
%       &+(3-p)^2h_*(w)(-\Delta w\frac{\nabla w}{|\nabla w|}+\nabla|\nabla w|),
% \end{align*}
% and run the same arguments as for $X$, we can obtain
% \begin{theorem}
%     For regular value $0\leq s\leq \tau<\infty$, we have
%     \begin{align}
%         Q_*(\tau)\geq Q_*(s).
%     \end{align}
% \end{theorem}

\section{Asymptotic behavior of $Q_*$ and $Q^{*}$}
\label{section:asymptotics}

In this section, we obtain the asymptotic behavior of the functions $Q_*$ and $Q^{*}$ as $t\to\infty$, and then complete proof of Corollary \ref{main-theorem}. To begin, recall that the asymptotic volume ratio of an end $E$ is defined as
\begin{align*}
    \text{Avg}(g;E)=\lim_{R\to\infty}\frac{B_R(o)\cap E}{\omega_3R^3},
\end{align*}
where $\omega_3$ is the volume of unit ball in $\mathbb{R}^3$, and $o\in M$ is any fixed point. Since $(M^3,g)$ is asymptotically flat with a single end $E$, we have $\text{Avg}(E;g)=1$.

In \cite{benatti2022asymptotic}, Benatti, Fogagnolo and Mazzieri proved that for complete $\mathcal C^{1,\alpha}$ asymptotically
conical manifolds, the solution $u$ to (\ref{u-pde}) has the following expansion:
\begin{align*}
    u=\frac{p-1}{3-p}\frac{c_p}{\text{Avg}(g;E)^{\frac{1}{p-1}}r^{\frac{3-p}{p-1}}}+o_2(r^{-\frac{3-p}{p-1}}).
\end{align*}
Since the asymptotically flat manifold $(M,g)$ is complete $\mathcal C^{1,\alpha}$ asymptotically conical, and its
asymptotic volume ratio is $1$, we obtain
\begin{align}
    \label{general-u-expansion}
    u=\frac{p-1}{3-p}\frac{c_p}{r^{\frac{3-p}{p-1}}}+o_2(r^{-\frac{3-p}{p-1}}).
\end{align}
First we give the asymptotic behavior of $Q_*(t)$. 

\begin{proposition}
One has $\lim_{t\to\infty} Q_*(t) = 0$. 
\end{proposition}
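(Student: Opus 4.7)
The plan is to show that each of the three terms in the definition of $Q_*(t)$ decays to zero, by combining exponential decay of the coefficient functions $f_*, g_*, h_*$ with boundedness of $W(t)$ and $dW/dt(t)$ on the asymptotic end of $M$.

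First I would extract the decay rates of the coefficients from the Frobenius analysis in Section \ref{section:ODE}. The decaying branch of solutions on mass-$2$ Schwarzschild satisfies $g_2(r), h_2(r) = O(r^{-(3-p)/(p-1)})$ in the variable $r$, and the change of variable $t = (1-p)\log u_s(r)$ yields $r(t) \sim \tilde c_{p,s}\, e^{t/(3-p)}$. Hence $g_*(t), h_*(t) = O(e^{-t/(p-1)})$, and the relation $f_*(t) = -\int_t^\infty h_*(s)\,ds$ gives $f_*(t) = O(e^{-t/(p-1)})$ as well.

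Next I would use the expansion (\ref{general-u-expansion}) of $u$ on the asymptotic end of $M$, differentiated once, to obtain $|\nabla w| = (p-1)|\nabla u|/u = (3-p)/r + o(1/r)$. For large $t$ the level set $\Sigma_t$ lies deep in the asymptotic region and is $C^1$-close to the coordinate sphere of radius $r(t) \sim c\, e^{t/(3-p)}$, so $\operatorname{Area}(\Sigma_t) = 4\pi r(t)^2(1+o(1))$ and consequently
\[
W(t) = \int_{\Sigma_t} |\nabla w|^2\, d\sigma \longrightarrow 4\pi(3-p)^2.
\]
Using the first variation formula
\[
\frac{dW}{dt} = \int_{\Sigma_t}\left(H|\nabla w| + 2\frac{\langle \nabla|\nabla w|,\nabla w\rangle}{|\nabla w|}\right) d\sigma
\]
together with the induced expansions for $H$ and $\nabla|\nabla w|$ near infinity, I would check that $dW/dt$ remains bounded as $t\to\infty$ (in fact it decays at rate $e^{-t/(3-p)}$, matching the Schwarzschild model).

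Combining the two steps, each summand of $Q_*(t)$ is $O(e^{-t/(p-1)})$, so $Q_*(t)\to 0$ as claimed. The main obstacle is the second step: the expansion (\ref{general-u-expansion}) supplies only an $o_2$ remainder on $u$, so controlled expansions for $\nabla^2 u$ (needed for $H$ and the term $\nabla|\nabla w|$ in $dW/dt$) will require some standard elliptic regularity for the $p$-Laplace equation near infinity. Fortunately, since $1/(p-1) > 1/(3-p)$ for $1 < p < 2$, the exponential decay rate of the coefficients dominates any polynomial-in-$r$ error in $W$ or $dW/dt$, so even crude $O(1)$ bounds on these geometric quantities suffice.
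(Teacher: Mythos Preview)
Your proposal is correct and follows essentially the same approach as the paper: show that $f_*, g_*, h_* \to 0$ while $W$ and $dW/dt$ stay bounded (the paper in fact asserts $W(t)\to 4\pi(3-p)^2$ and $dW/dt\to 0$ directly from the expansion (\ref{general-u-expansion})), so each term of $Q_*(t)$ vanishes in the limit. Your version is more quantitative---you track the explicit exponential decay rate $e^{-t/(p-1)}$ of the coefficients and observe that it beats any polynomial error in $W$ or $dW/dt$---whereas the paper's proof is a two-line appeal to the already-established limits of the coefficient functions and the asymptotics of $u$.
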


\begin{proof}
    Recall that the functions $f_*(t)$, $g_*(t)$, and $h_*(t)$ all go to 0 as $t\to \infty$. It follows easily from (\ref{general-u-expansion}) that $W(t) \to 4\pi(3-p)^2$ and $\frac{dW}{dt}(t) \to 0$ as $t\to \infty$. Therefore 
    \[
    Q_*(t) = 4\pi(3-p)^2 f_*(t) + g_*(t)W(t) + (p-1)(3-p)h_*(t)\frac{dW}{dt}(t) \to 0
    \]
    as $t\to \infty$. This proves the proposition. 
\end{proof}

Now, we give the asymptotic behavior of $Q^*(t)$.

\begin{proposition}
    One has
    \begin{align}
        \lim_{t\to\infty}Q^*(t)\leq 8\pi(3-p)^2\tilde{c}_{p,s}\left(\frac{p-1}{3-p}c_p\right)^{-\frac{p-1}{3-p}} m_{\text{ADM}}-4\pi\left((3-p)^2+4\right).
    \end{align}
\end{proposition}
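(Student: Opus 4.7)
The plan is to compute $\lim_{t\to\infty} Q^*(t)$ by matching the explicit asymptotics of the coefficients $f^*, g^*, h^*$ from Remark~\ref{remark: asy behavior of coefficient functions} against sufficiently precise asymptotics of $W(t)$ and of $\int_{\Sigma_t}H|\grad w|\,d\sigma_t$ on the asymptotically flat end. First, inverting $t = (1-p)\log u$ using \eqref{general-u-expansion} gives
\[
r(t) = \tilde c\, e^{t/(3-p)}(1 + o(1)), \qquad \tilde c = \Bigl(\tfrac{p-1}{3-p}\,c_p\Bigr)^{(p-1)/(3-p)},
\]
the analogue for $(M,g)$ of the constant $\tilde c_{p,s}$ attached to mass-$2$ Schwarzschild. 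Next, feeding \eqref{general-u-expansion} together with the standard $O(r^{-q})$ expansion of $g_{ij}$ into the formulas for $|\grad w|_g$, $H$, and the induced area form on level sets, and proceeding in strict analogy with the Schwarzschild computation that produced $W_s(r) = 4\pi(3-p)^2[1 - 2(3-p)/r + O(r^{-2})]$, one obtains expansions
\begin{align*}
W(t) &= 4\pi(3-p)^2 + \frac{W_1}{r(t)} + o\!\left(\tfrac{1}{r(t)}\right),\\
\int_{\Sigma_t}H|\grad w|\,d\sigma_t &= 8\pi(3-p) + \frac{B_1}{r(t)} + o\!\left(\tfrac{1}{r(t)}\right),
\end{align*}
in which $W_1$ and $B_1$ are linear in $m_{\mathrm{ADM}}$ and specialize, on Schwarzschild of mass $m$, to the values $W_1 = -4\pi(3-p)^3 m$ and $B_1 = -4\pi(3-p)(5-p) m$.

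Next, using \eqref{equation:dwdt} I would rewrite
\[
Q^*(t) = 4\pi(3-p)^2 f^*(t) + \bigl[g^*(t) + 2(3-p)h^*(t)\bigr]\,W(t) - (3-p)^2 h^*(t) \int_{\Sigma_t}H|\grad w|\,d\sigma_t,
\]
and substitute both sets of expansions. The $O(e^{t/(3-p)})$ contributions cancel identically thanks to the arithmetic identity $4\pi(3-p)^2 + 4\pi(3-p)^2 - (3-p)\cdot 8\pi(3-p) = 0$, which is exactly the combinatorics that made the monotone quantity $Q^*$ well-defined in the first place. The surviving contributions split into a constant piece, obtained by pairing the constant parts of $f^*, g^*$ with the leading parts of $W$ and $\int H|\grad w|$, and a mass piece, obtained by pairing the leading exponential parts of $g^*, h^*$ with the $1/r$-corrections of $W$ and $\int H|\grad w|$. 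After simplification the mass piece collapses, using the identity $W_1 - (3-p)B_1 = 8\pi(3-p)^2 m_{\mathrm{ADM}}$, to
\[
\frac{\tilde c_{p,s}}{\tilde c}\bigl[W_1 - (3-p)B_1\bigr] = 8\pi(3-p)^2\,\tilde c_{p,s}\Bigl(\tfrac{p-1}{3-p}c_p\Bigr)^{-(p-1)/(3-p)} m_{\mathrm{ADM}},
\]
while the constant piece reduces to $-4\pi((3-p)^2+4)$, producing the claimed bound.

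Finally, the direction "$\leq$" (rather than equality) in the conclusion arises because on a general $(M,g)$ the pointwise inequality \eqref{W-ODE} is strict by a non-negative amount involving $R_g$, $\|\mathring A\|^2$ and the H\"older defect in \eqref{equation:d2wdt2}; these accumulate as a non-positive correction to the limit and close any gap between the exact limit and the stated upper bound. The main technical obstacle is the derivation of the sub-leading coefficients $W_1$ and $B_1$ on a general AF manifold (not merely on Schwarzschild): this requires one extra order of asymptotic expansion for $u$ beyond \eqref{general-u-expansion}, or equivalently a Pohozaev-type integral identity linking the $1/r$-correction of $W$ directly to $m_{\mathrm{ADM}}$ via the AF definition. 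Once that step is secured, the remainder is algebraic bookkeeping, and the existence of the limit is already guaranteed by the monotonicity established in Section~\ref{section:Monotonicity}.
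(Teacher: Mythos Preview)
Your decomposition of $Q^*$ via \eqref{equation:dwdt} and the pairing with the coefficient asymptotics from Remark~\ref{remark: asy behavior of coefficient functions} is the right starting point, and is what the paper does too. But the paper's argument diverges from yours at the crucial step and thereby avoids the gap you yourself flag.

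The paper does \emph{not} try to extract subleading coefficients $W_1$, $B_1$ from \eqref{general-u-expansion}; that expansion gives only $o_2$ control and cannot produce well-defined $1/r$ corrections to $W$ or to $\int_{\Sigma_t}H|\grad w|$ on a general asymptotically flat manifold. Instead the paper splits $Q^*=A^*+B^*$ with $B^*=(g^*+(3-p)h^*)W$, notes that $g^*+(3-p)h^*$ has a finite limit so that $B^*$ converges to the constant term, and recognizes $A^*$ as a positive multiple (with ratio tending to a finite constant) of the quantity
\[
F_p(t)=\tfrac{1}{3-p}\Bigl(\tfrac{p-1}{3-p}c_p\Bigr)^{\frac{p-1}{3-p}}e^{\frac{t}{3-p}}\Bigl[4\pi(3-p)-\int_{\Sigma_t}H|\grad w|+\tfrac{1}{3-p}\int_{\Sigma_t}|\grad w|^2\Bigr]
\]
from \cite{agostiniani2022riemannian}, for which the estimate $\lim_{t\to\infty}F_p(t)\le 8\pi\,m_{\mathrm{ADM}}$ is already established there. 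Your claimed ``identity'' $W_1-(3-p)B_1=8\pi(3-p)^2 m_{\mathrm{ADM}}$ is exactly this cited result in disguise, and on a general $(M,g)$ it is an \emph{inequality}, not an identity; the individual coefficients $W_1$, $B_1$ need not even exist as limits.

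This also means your explanation of why the conclusion is ``$\le$'' is wrong. The defect in \eqref{W-ODE} governs the monotonicity of $Q^*$ on finite intervals; it has no bearing on the value of $\lim_{t\to\infty}Q^*(t)$, which is a purely asymptotic quantity with no memory of the interior. The ``$\le$'' is inherited entirely from $\lim F_p\le 8\pi\,m_{\mathrm{ADM}}$, which in \cite{agostiniani2022riemannian} reflects the discrepancy at infinity between the $p$-harmonic level sets $\Sigma_t$ and the large coordinate spheres appearing in the definition of the ADM mass.
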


\begin{proof}
    We rewrite $Q^{*}(t)$ into two parts:
    \begin{align*}
        Q^{*}(t)=A^*(t)+B^*(t),
    \end{align*}
    where
    \begin{align*}
        A^*(t)&=4\pi(3-p)^2f^{*}-(3-p)^2h^{*}\int_{\Sigma_t}|\nabla w|H+(3-p)h^{*}\int_{\Sigma_t}|\nabla w|^2,\\
        B^*(t)&=\left(g^{*}+(3-p)h^{*}\right)\int_{\Sigma_t}|\nabla w|^2.
    \end{align*}
In \cite{agostiniani2022riemannian}, they defined 
\begin{align*}
    F_p(t)=\frac{1}{3-p}\left(\frac{p-1}{3-p}c_p\right)^{\frac{p-1}{3-p}}e^{\frac{t}{3-p}}\left[4\pi(3-p)-\int_{\Sigma_t}|\nabla w|H+\frac{1}{3-p}\int_{\Sigma_t}|\nabla w|^2\right],
\end{align*}
and proved using (\ref{general-u-expansion}) that
\begin{align*}
    \lim_{t\to\infty}F_p(t)\leq 8\pi m_{\text{ADM}}.
\end{align*}
By Remark \ref{remark: asy behavior of coefficient functions}, we have
\begin{gather*}
    h^{*}\sim \frac{\tilde{c}_{p,s}}{3-p}e^{\frac{t}{3-p}}+O(e^{-\frac{t}{3-p}}),\\
    \lim_{t\to\infty}\frac{f^{*}}{h^{*}}=3-p.
\end{gather*}
Thus we obtain
\begin{align*}
    \lim_{t\to\infty}\frac{A^*(t)}{F_p(t)}=(3-p)^2\tilde{c}_{p,s}\left(\frac{p-1}{3-p}c_p\right)^{-\frac{p-1}{3-p}},
\end{align*}
and it then follows that
\begin{align}
    \lim_{t\to\infty}A^*(t)\leq 8\pi(3-p)^2\tilde{c}_{p,s}\left(\frac{p-1}{3-p}c_p\right)^{-\frac{p-1}{3-p}} m_{\text{ADM}}.
\end{align}
Also, we have
\begin{align*}
    g^{*}+(3-p)h^{*}\sim -(3-p)^2-4+O(r^{-1}),
\end{align*}
which shows that
\begin{align}
    \lim_{t\to\infty}B^*(t)=-4\pi\left((3-p)^2+4\right).
\end{align}
Therefore, we obtain
\begin{align*}
    \lim_{t\to\infty}Q^*(t)\leq 8\pi(3-p)^2\tilde{c}_{p,s}(\frac{p-1}{3-p}c_p)^{-\frac{p-1}{3-p}} m_{\text {ADM}}-4\pi\left((3-p)^2+4\right),
\end{align*}
as needed.
\end{proof}

We can now give the proof of Corollary \ref{main-theorem}. 

\begin{proof}(Corollary \ref{main-theorem}). 
Let $(M^3,g)$ be a complete, asymptotically flat manifold with non-negative scalar curvature and minimal boundary $\Sigma$. Assume that $H_2(M,\Sigma)=0$. The quantity $Q_*$ is non-decreasing so we obtain 
\begin{align*}
0 = \lim_{t\to \infty} Q_*(t) \ge Q_*(0) &= 
4\pi(3-p)^2 f_*(0) + g_*(0)W(0) + (p-1)(3-p)h_*(0) \frac{dW}{dt}(0) ,\\
&= 4\pi(3-p)^2 f_*(0) + \bigg[g_*(0)+2(3-p)h_*(0)\bigg]W(0),
\end{align*}
where equation (\ref{equation:dwdt}) is used to get the last line. 
Proposition \ref{proposition:decaying-value-0} yields $
f_*(0)<0$ and $g_*(0)+2(3-p)h_*(0) >0$.
Thus we obtain
\begin{align*}
W(0) \le \frac{-4\pi (3-p)^2 f_*(0)}{g_*(0)+2(3-p)h_*(0)}. 
\end{align*}
According to Proposition \ref{proposition:decaying-value-0} again, the right hand side of the above inequality is equal to $W_s(0)$,
where $W_s$ is the $W$ function on mass 2 Schwarzschild. To summarize, we have now shown that
\begin{equation}
\label{eq:initial}
W(0)\le W_s(0)
\end{equation}
by exploiting the monotonicity of $Q_*$.

The quantity $Q^*$ is also  non-decreasing and so 
\[
Q^*(0) \le \lim_{t\to\infty} Q^*(t) \le 8\pi(3-p)^2\tilde{c}_{p,s}(\frac{p-1}{3-p}c_p)^{-\frac{p-1}{3-p}} m_{ADM}-4\pi\left((3-p)^2+4\right).
\]
On the other hand,  let $Q^*_s$ be the $Q^*$ function on mass 2 Schwarzschild. According to Proposition \ref{proposition:growing-value-0}, we have $g^*(0)+2(3-p)h^*(0) < 0$. Together with equations (\ref{equation:dwdt}) and (\ref{eq:initial}), it follows that  
\begin{align*}
Q^*(0) &= 4\pi (3-p)^2 f^*(0) + g^*(0) W(0) + (p-1)(3-p)h^*(0)\frac{dW}{dt}(0)\\
&= 4\pi (3-p)^2 f^*(0) + \bigg[g^*(0) + 2(3-p)h^*(0)\bigg]W(0) \\
&\ge 4\pi (3-p)^2 f^*(0) + \bigg[g^*(0) + 2(3-p)h^*(0)\bigg]W_s(0)\\
&=4\pi (3-p)^2 f^*(0) + g^*(0) W_s(0) + (p-1)(3-p)h^*(0)\frac{dW_s}{dt}(0) = Q^*_s(0). 
\end{align*}
Since $Q^*_s$ is constant, we thus obtain
\begin{align*}
    Q^*(0) \ge Q^*_s(0) = \lim_{t\to \infty}Q^*_s(t) = 16\pi(3-p)^2 \left(\frac{p-1}{3-p}\right)^{-\frac{p-1}{3-p}} - 4\pi\left((3-p)^2 + 4\right).
\end{align*}
Combining these inequalities, we obtain 
\[
\tilde c_{p,s}\left(c_p\right)^{-\frac{p-1}{3-p}}m_{\text{ADM}} \ge 2.
\]
This is equivalent to 
\[
m_{\text{ADM}} \ge 2\left(\frac{C_p}{K_p}\right)^{\frac{1}{3-p}},
\]
as needed. 

Now, we prove the rigidity case.
Let $\mathcal{T}$ denote the set of regular values of $w$, and suppose $T=\sup_{L>0}\{[0,L)\subset\mathcal{T}\}$. Such $T$ exists because we assume $\partial M$ is nontrivial. 
When the equality is achieved, $Q^*$ remains constant for $t\in\mathcal{T}$, and thus for $t\in [0,T)$, we have $(Q^*)'(t)\equiv 0$, which requires
\begin{align*}
    \chi(\Sigma_t)=2, \quad 
    A_{ij}=\frac{H}{2}g_{ij}, \quad 
    \nabla^{\Sigma_t}|\nabla w|=0, \quad 
    R_g=0.
\end{align*}
From above conditions, we know the region $M_T:=\{0\leq w< T\}$ is topologically $[0,s_0)\times S^2$, and its metric can be written as
\begin{align*}
    g=ds^2+\varphi(s)^2g_0,
\end{align*}
where $g_0$ is the round metric of the unit sphere $S^2$. By the same argument as in Proposition \ref{proposition:W-monotonicity}, the metric must be given by
\begin{align*}
    g=ds^2+\psi(s)^2g_0, \quad \forall x\in M_T,
\end{align*}
where $([0,\infty)\times S^2,ds^2+\psi(s)^2g_0)$ is the Schwarzschild metric, whose horizon has the same area as $\partial M$.

Now suppose for contradiction that $T < \infty$. Let $u_s$ be the $p$-harmonic function on Schwarzschild with $u_s = 1$ on $\{0\}\times S^2$ and $u_s\to 0$ at infinity. Then necessarily $1-u$ is a positive multiple of $1-u_s$ on $M_T$.  Indeed, both functions are $p$-harmonic on $M_T$, take the value 0 on $\{0\}\times S^2$, and take constant values on the other boundary component of $M_T$. Hence the maximum principle implies that $1-u$ is a multiple of $1-u_s$. Since the gradient of $u_s$ never vanishes, this implies that $\vert \grad u\vert \neq 0$ on the level set $\{w=T\}$, and hence there exists $\eps > 0$ such that $[0,T+\eps)\subset \mathcal T$, contradicting the definition of $T$. 

Therefore, $w$ has no critical values, and $Q^*$ is identically constant for all $t\in [0,\infty)$, which implies \eqref{W-ODE} achieves equality. By Proposition \ref{proposition:W-monotonicity}, $(M,g)$ is isometric to spatial Schwarzschild with horizon boundary.
\end{proof}

% Then $u$ solves $p$-harmonic function on $M_T$ with Dirichlet boundary condition, and we can extend $u$ to $[0,\infty)\times S^2,ds^2+\psi(s)^2g_0$, denoted as $\tilde{u}$, where $\tilde{u}$ solves $p$-harmonic function on the entire Schwarzschild, satisfying $\tilde{u}|_{\{0\}\times S^2}=1$, $\lim_{x\to\infty}\tilde{u}=c$.
% Here, $c\neq 1$ is a constant. $\tilde{u}$ has no critical points on the entire manifold, which in turn implies that $|\nabla w|\neq 0$ on the level set $\{w=T\}$, and hence there exists $\epsilon>0$, such that $[0,T+\epsilon)\subset\mathcal{T}$, contradicted to the definition of $T$.

% Therefore, $w$ has no critical value, and $Q^*\equiv constant$ for all $t\in [0,\infty)$, which implies \eqref{W-ODE} should achieve equality, and by Proposition \ref{proposition:W-monotonicity}, $(M,g)$ is isometric to spatial Schwarzschild with horizon boundary.

\bibliographystyle{mrl}
\bibliography{ref.bib}

\begin{thebibliography}{10}

\bibitem{agostiniani2020sharp}
V.~Agostiniani, M.~Fogagnolo, and L.~Mazzieri, \emph{Sharp geometric
  inequalities for closed hypersurfaces in manifolds with nonnegative Ricci
  curvature}, Inventiones mathematicae \textbf{222} (2020), no.~3,  1033--1101.

\bibitem{agostiniani2022minkowski}
---{}---{}---, \emph{Minkowski inequalities via nonlinear potential theory},
  Archive for Rational Mechanics and Analysis \textbf{244} (2022), no.~1,
  51--85.

\bibitem{agostiniani2022riemannian}
V.~Agostiniani, C.~Mantegazza, L.~Mazzieri, and F.~Oronzio, \emph{Riemannian
  Penrose inequality via nonlinear potential theory}, arXiv preprint
  arXiv:2205.11642  (2022)\hskip -.1cm.

\bibitem{agostiniani2020monotonicity}
V.~Agostiniani and L.~Mazzieri, \emph{Monotonicity formulas in potential
  theory}, Calculus of Variations and Partial Differential Equations
  \textbf{59} (2020), no.~1,  6.

\bibitem{agostiniani2021green}
V.~Agostiniani, L.~Mazzieri, and F.~Oronzio, \emph{A Green's function proof of
  the positive mass theorem}, arXiv preprint arXiv:2108.08402  (2021)\hskip
  -.1cm.

\bibitem{arnowitt1961coordinate}
R.~Arnowitt, S.~Deser, and C.~W. Misner, \emph{Coordinate invariance and energy
  expressions in general relativity}, Physical Review \textbf{122} (1961),
  no.~3,  997.

\bibitem{benatti2021minkowski}
L.~Benatti, M.~Fogagnolo, and L.~Mazzieri, \emph{Minkowski inequality on
  complete Riemannian manifolds with nonnegative Ricci curvature}, arXiv
  preprint arXiv:2101.06063  (2021)\hskip -.1cm.

\bibitem{benatti2022asymptotic}
---{}---{}---, \emph{The asymptotic behaviour of p-capacitary potentials in
  asymptotically conical manifolds}, Mathematische Annalen  (2022) 1--41.

\bibitem{benatti2023nonlinear}
---{}---{}---, \emph{Nonlinear isocapacitary concepts of mass in nonnegative
  scalar curvature}, arXiv preprint arXiv:2305.01453  (2023)\hskip -.1cm.

\bibitem{bray2008capacity}
H.~Bray and P.~Miao, \emph{On the capacity of surfaces in manifolds with
  nonnegative scalar curvature}, Inventiones mathematicae \textbf{172} (2008)
  459--475.

\bibitem{bray2004classification}
H.~Bray and A.~Neves, \emph{Classification of prime 3-manifolds with Yamabe
  invariant greater than $\mathbb{RP}^3$}, Annals of mathematics  (2004)
  407--424.

\bibitem{bray2001proof}
H.~L. Bray, \emph{Proof of the Riemannian Penrose inequality using the positive
  mass theorem}, Journal of Differential Geometry \textbf{59} (2001), no.~2,
  177--267.

\bibitem{bray2022harmonic}
H.~L. Bray, D.~P. Kazaras, M.~A. Khuri, and D.~L. Stern, \emph{Harmonic
  functions and the mass of 3-dimensional asymptotically flat Riemannian
  manifolds}, The Journal of Geometric Analysis \textbf{32} (2022), no.~6,
  184.

\bibitem{bray2009riemannian}
H.~L. Bray and D.~A. Lee, \emph{On the Riemannian Penrose inequality in
  dimensions less than eight}, Duke Math. J. \textbf{146} (2009), no.~1,
  81--106.

\bibitem{chan2022monotonicity}
P.-Y. Chan, J.~Chu, M.-C. Lee, and T.-Y. Tsang, \emph{Monotonicity of the $ p
  $-Green functions}, arXiv preprint arXiv:2202.13832  (2022)\hskip -.1cm.

\bibitem{colding2012new}
T.~H. Colding, \emph{New monotonicity formulas for Ricci curvature and
  applications. I}, Acta Mathematica \textbf{209} (2012) 229--263.

\bibitem{colding2014ricci}
T.~H. Colding and W.~P. Minicozzi, \emph{Ricci curvature and monotonicity for
  harmonic functions}, Calculus of Variations and Partial Differential
  Equations \textbf{49} (2014) 1045--1059.

\bibitem{dong2023stability}
C.~Dong and A.~Song, \emph{Stability of Euclidean 3-space for the positive mass
  theorem}, arXiv preprint arXiv:2302.07414  (2023)\hskip -.1cm.

\bibitem{geroch1973energy}
R.~Geroch, \emph{Energy extraction}, Annals of the New York Academy of Sciences
  \textbf{224} (1973), no.~1,  108--117.

\bibitem{hirsch2022monotone}
S.~Hirsch, P.~Miao, and L.-F. Tam, \emph{Monotone quantities of $ p $-harmonic
  functions and their applications}, arXiv preprint arXiv:2211.06939
  (2022)\hskip -.1cm.

\bibitem{huisken2001inverse}
G.~Huisken and T.~Ilmanen, \emph{The inverse mean curvature flow and the
  Riemannian Penrose inequality}, Journal of Differential Geometry \textbf{59}
  (2001), no.~3,  353--437.

\bibitem{mazurowski2023yamabe}
L.~Mazurowski and X.~Yao, \emph{The Yamabe Invariant of $\mathbb{RP}^3$ via
  Harmonic Functions}, arXiv preprint arXiv:2305.00854  (2023)\hskip -.1cm.

\bibitem{miao2022mass}
P.~Miao, \emph{Mass, capacitary functions, and positive mass theorems with or
  without boundary}, arXiv preprint arXiv:2207.03467  (2022)\hskip -.1cm.

\bibitem{moser2007inverse}
R.~Moser, \emph{The inverse mean curvature flow and p-harmonic functions},
  Journal of the European Mathematical Society \textbf{9} (2007), no.~1,
  77--83.

\bibitem{munteanu2021comparison}
O.~Munteanu and J.~Wang, \emph{Comparison theorems for three-dimensional
  manifolds with scalar curvature bound}, arXiv preprint arXiv:2105.12103
  (2021)\hskip -.1cm.

\bibitem{oronzio2022adm}
F.~Oronzio, \emph{ADM mass, area and capacity in asymptotically flat $3
  $-manifolds with nonnegative scalar curvature}, arXiv preprint
  arXiv:2208.06688  (2022)\hskip -.1cm.

\bibitem{penrose1973naked}
R.~Penrose, \emph{Naked singularities}, Annals of the New York Academy of
  Sciences \textbf{224} (1973), no.~1,  125--134.

\bibitem{petersen2006riemannian}
P.~Petersen, Riemannian geometry, Vol. 171, Springer (2006).

\bibitem{schoen1979proof}
R.~Schoen and S.-T. Yau, \emph{On the proof of the positive mass conjecture in
  general relativity}, Communications in Mathematical Physics \textbf{65}
  (1979) 45--76.

\bibitem{stern2022scalar}
D.~L. Stern, \emph{Scalar curvature and harmonic maps to $ S^1$}, Journal of
  Differential Geometry \textbf{122} (2022), no.~2,  259--269.

\bibitem{witten1981new}
E.~Witten, \emph{A new proof of the positive energy theorem}, Communications in
  Mathematical Physics \textbf{80} (1981), no.~3,  381--402.

\bibitem{xiao2016p}
J.~Xiao, \emph{The p-harmonic capacity of an asymptotically flat 3-manifold
  with non-negative scalar curvature}, in Annales Henri Poincar{\'e}, Vol.~17,
  2265--2283, Springer (2016).

\end{thebibliography}

\end{document}